\newtheorem{theo}{Theorem}[section]
\newtheorem{cor}[theo]{Corollaire}
\newtheorem{lem}[theo]{Lemma}
\newtheorem{prop}[theo]{Proposition}
\newtheorem{defn}[theo]{Definition}
\newtheorem{hyps}[theo]{Assumptions}
\newtheorem{hyp}[theo]{Assumption}
\theoremstyle{definition}
\newtheorem{rem}[theo]{Remark}
\def\a{{\a}}
\def\a{{\mathfrak a}}
\def\C{\mathbb C}
\def\E{\mathbb E}
\def\N{\mathbb N}
\def\P{\mathbb{P}}
\def\R{\mathbb R}
\def\Z{\mathbb{Z}}
\DeclareMathOperator{\ch}{ch}
\begin{document}  
\title[Brownian paths  in  an alcove ]{Brownian paths in an alcove and the Littelmann path model\\ {\scriptsize what we know what we do not  know what we hope}}

\author{Manon Defosseux}
\maketitle  
 \begin{abstract}  We present some results  connecting   Littelmann paths and Brownian paths in the framework of affine Kac--Moody algebras.  We prove in particular that the string coordinates associated to a specific sequence of random Littelmann paths converge towards their analogs for Brownian paths.   At the end we explain why we hope that our results  will be the first steps on a way which could hopefully lead to a Pitman type theorem for a Brownian motion in an alcove associated to an affine Weyl group. 
 \end{abstract}

\section{Introduction}
A Pitman's theorem states that if $\{b_t,t\ge 0\}$ is a one dimensional Brownian motion, then $$\{b_t - 2 \inf_{0 \leq s\leq t}b_s, t\ge 0\}$$ is a three dimensional Bessel process, i.e.\ a Brownian motion conditioned  in Doob's sense  to remain forever positive \cite{pitman}.  Philippe Biane, Philippe Bougerol and Neil O'Connell have proved in \cite{bbo2} that a similar theorem exists in which the real Brownian motion is replaced by a Brownian motion on a finite dimensional real vector space. A finite Coxeter group acts on this space and the positive Brownian motion is replaced by a conditioned Brownian motion with values in a fundamental domain for the action of this group. In that case, the second process   is obtained by applying to the first one  Pitman   transformations in successive directions given by a reduced decomposition of the longest word in the Coxeter group. 

Paper  \cite{boubou-defo}  gives a similar representation  theorem for a space-time real Brownian motion $\{(t,b_t):t\ge 0\}$  conditioned to remain in  the cone
$$\mathcal C'=\{(t,x)\in \R\times\R: 0\le x\le t\}.$$
Actually $\mathcal C'$ is a fundamental domain for the action on $\R_+\times \R$ of an affine Coxeter group of type $A^1_1$. This affine Coxeter group, which is not a finite group, is generated by two reflections and it could be natural to think  that one could obtain a space-time Brownian motion conditioned to remain in $\mathcal C'$ applying successively and infinitely to a space-time Brownian motion two Pitman transformations corresponding to these two reflections. We have proved with Philippe Bougerol in \cite{boubou-defo} that this is not the case. Actually a L\'evy type transformation has to be added at the end of the successive Pitman transformations if we want to get a Pitman's   representation theorem in this case.  

It is now  natural  to ask if  such a theorem exists for the other affine Coxeter groups. We will focus on Coxeter groups  of type $A_n^1$, with $n\ge 1$. Such a Coxeter group  is the Weyl group of a type $A$ extended  affine Kac--Moody algebra.  The presence of the L\'evy transformations in the case when  $n=1$   makes   the higher rank statement quite open. 

When $n=1$, the proof of the Pitman type theorem in  \cite{boubou-defo} rests on an approximation of the affine Coxeter group   by a sequence of dihedral groups for which the results of \cite{bbo2} are applicable. Such an approximation does not exist  for the higher ranks.  Nevertheless,   another approximation exists of the Brownian model   that we are interested in. It involves the Littelmann path model.  

The Littelmann's model  is a combinatorial model which allows to describe weight properties of some particular integrable representations of Kac--Moody algebras.   Philippe Biane, Philippe Bougerol and Neil O'Connell pointed out in \cite{bbo}  the fundamental fact that the Pitman transformations are intimately related to the Littelmann path model.  In the case of an affine Lie algebra, this model allows to construct random paths   which approximates the Brownian model that we are interested in.   The knowledge  of the Littelmann paths properties gives then a way to get a better understanding of those of the Brownian paths.     

Unfortunately this approach didn't lead for the moment to  a Pitman type theorem in an affine framework. Nevertheless, we have obtained  several non trivial results that we present here. In particular,  we prove the convergence of the string coordinates arising in the framework of the Littelmann path model towards their analogs defined for the Brownian paths.  Besides we  use the Littelmann path approach to try to guess which correction could be needed if a Pitman type theorem existed in this case.  We propose here a conjectural correction, with   encouraging simulations. 

Finally, notice  that   the space component of a  space-time Brownian motion conditioned to remain in an affine Weyl chamber is equal, up to a time inversion, to a Brownian motion conditioned to remain in an alcove, so that our suggestion provides also a suggestion for    a Pitman's theorem for this last conditioned process.

These notes are organized  as follows. In section \ref{BD} we recall the necessary background about affine Lie algebras and their representations.   The Littelmann path model in this context  is explained briefly in section \ref{LPM}   where we recall  in particular the definition of the string coordinates. In section  \ref{RWLP} we define two sequences of random Littelmann paths. The first one converges towards a space-time Brownian motion in the dual of a Cartan subalgebra of an affine Lie algebra. The second one converges towards a space-time Brownian motion in an affine Weyl chamber. This last process is defined in section \ref{CSTBM}.  The statements of the two convergences are given in section  \ref{WWK}, where    we also prove the convergence of the string coordinates associated to a sequence of  random Littelmann paths towards their analogs for Brownian paths. Section \ref{WWDNK} is devoted to explain what is missing in the perspective of a Pitman type theorem. Finally in section \ref{WWH}  we use the description of the highest weight Littelmann modules given in \cite{littel2}  to suggest  transformations which could play the role of   L\'evy transformations in the case when $n$ is greater than one.

\section{Basic definitions}\label{BD}
This section is based on  \cite{Kac}. In order to make the paper as easy as possible to read, we consider only the case of an extended affine Lie algebra of type $A$. For this we consider a realization $(\widehat{\mathfrak h},\widehat\Pi,\widehat\Pi^\vee)$ of a Cartan matrix of type $A^{1}_n$ for $n\ge 1$. That is to say
$$\widehat\Pi=\{\alpha_0,\dots,\alpha_n\}\subset \widehat{\mathfrak h}^*\textrm{ and } \widehat\Pi^\vee=\{ \alpha_0^\vee,\dots,\alpha_n^\vee\}\subset \widehat{\mathfrak h} $$ with
$$\langle \alpha_i,\alpha_j^\vee \rangle=\left\{
    \begin{array}{ll}
      2&\textrm{ if $i=j$}\\ 
-1&\textrm{ if $\vert i-j\vert \in\{1,n\}$ when $n\ge 2$ }\\
-2&\textrm{ if $\vert i-j\vert =1$ when $n=1$,}\\
    \end{array}
\right.$$
and $\dim \widehat{\mathfrak h}=n+2$, where $\langle \cdot,\cdot\rangle$ is the canonical pairing. We consider an element $d\in\widehat{ \mathfrak h}$ such that $$\langle\alpha_i,d\rangle= \delta_{i0},$$ for $ i\in \{0,\dots,n\}$ and define $\Lambda_0\in \widehat{\mathfrak h}^*$ by
$$ \langle \Lambda_0,d\rangle =0 \textrm{ and }   \langle \Lambda_0,\alpha^\vee_i\rangle =\delta_{i0},$$
for $ i\in \{0,\dots,n\}$. We consider the Weyl group $\widehat W$ which is  the subgroup of $ \mbox{GL}(\widehat{\mathfrak{h}}^*)$ generated by the simple reflexions $s_{\alpha_i}$, $i\in \{0,\dots,n\}$, defined by
$$s_{\alpha_i}(x)=x-\langle x,\alpha_i^\vee\rangle\alpha_i, \quad x\in \widehat{\mathfrak h}^*,$$
$i\in\{0,\dots,n\}$, and equip $\widehat{\mathfrak h}^*$ with a non degenerate $\widehat{W}$-invariant bilinear form $(\cdot\vert\cdot)$ defined by 
$$\left\{
  \begin{array}{ll}
(\alpha_i\vert\alpha_j)=  2&\textrm{ if $i=j$ } \\ 
(\alpha_i\vert\alpha_j)=-1&\textrm{ if $\vert i-j\vert \in\{1,n\}$ when $n\ge 2$ }\\
(\alpha_i\vert\alpha_j)=-2&\textrm{ if $\vert i-j\vert =1$ when $n=1$,}\\
(\alpha_i\vert\alpha_j)=0&\textrm{ otherwise,}
  \end{array}
\right.$$
 $(\Lambda_0\vert\Lambda_0)=0$ and $(\alpha_i\vert\Lambda_0)=\delta_{i0}$, $i\in\{0,\dots,n\}$.
 We consider as usual the set of integral weights $$\widehat P=\{\lambda\in \widehat{\mathfrak h}^* : \langle \lambda,\alpha_i^\vee\rangle \in \Z, i\in \{0,\dots,n\}\},$$
and the set of dominant integral weights $$\widehat P^+=\{\lambda\in\widehat{ \mathfrak h}^* : \langle \lambda,\alpha_i^\vee\rangle \in \N, i\in \{0,\dots,n\}\}.$$ For $\lambda\in \widehat P_+$ we denote by $V(\lambda)$ the irreducible highest  weight module of highest weight $\lambda$ of an affine Lie algebra  of type  $A_n^{1}$ with $\widehat{\mathfrak h}$ as a Cartan subalgebra and $\widehat \Pi$ as a set of simple roots.   We consider the formal character  
$$\mbox{ch}_\lambda =\sum_{\beta\in \widehat P} m_\beta^\lambda e^\beta,$$
where $m_\beta^\lambda $ is the multiplicity of the weight $\beta$ in $V(\lambda)$. If $\widehat \nu\in\widehat{\mathfrak h}^*$  satisfies $(\widehat\nu\vert \alpha_0)>0$ then the series 
$$ \sum_{\beta\in \widehat P}  m_\beta^\lambda e^{(\beta\vert\widehat\nu)},$$
converges and we denote by $\mbox{ch}_\lambda(\widehat\nu)$ its limit.  

\section{Littelmann path model}\label{LPM}
From now on we work on the   real vector space   
$$\widehat{\mathfrak h}^*_\R=\R\Lambda_0\oplus \bigoplus_{i=0}^n\R\alpha_i.$$
In this section, we recall what we need about the Littelmann path model (see mainly \cite{littel2} for more details, and also  \cite{littel1}). Fix $T\ge 0$. A path $\pi$ is a piecewise linear function $\pi:[0,T]\to \widehat{\mathfrak h}_\R^*$ such that $\pi(0)=0$. We consider the cone generated by $\widehat P_+$  $$\mathcal C=\{\lambda\in \widehat{\mathfrak h}^*_\R: \langle\lambda,\alpha_i^\vee\rangle\ge 0, i\in \{0,\dots,n\}\}.$$ A path $\pi$ is called dominant if $\pi(t)\in \mathcal C$ for all $t\in [0,T]$. It is called integral if $\pi(T)\in \widehat P$ and $$\min_{t\in [0,T]}\langle\pi(t),\alpha_i^\vee\rangle\in \Z,\textrm{  for all } i\in \{0,\dots,n\}.$$ 
\paragraph{\bf Pitman's transforms, Littelmann module.} We define the Pitman's transforms $\mathcal P_{\alpha_i}$,  $i\in \{0,\dots,n\}$,  which operate on the set of continuous functions $\eta : \R_+ \to \widehat{\mathfrak h}^*_\R$  such that $\eta(0) = 0$.  They are given by 
$$\mathcal P_{\alpha_i}\eta(t)=\eta(t)-\inf_{s\le t}\langle\eta(s),\alpha_i^\vee\rangle\alpha_i, \quad t\in \R_+,$$
$i\in \{0,\dots,n\}$. Let  us fix a sequence  $(i_k)_{k\ge 0}$   with values in $\{0,\dots,n\}$ such that 
\begin{align}\label{condition} 
\vert \{k:i_k=j\}\vert =\infty\, \textrm{ for all $j\in \{0,\dots,n\}.$}
\end{align} 
Given an integral dominant path   $\pi$ defined on $[0,T]$, such that $\pi(T)\in \widehat P_+$, the Littelmann  module $B\pi$ generated by $\pi$ is the set of paths   $\eta$ defined on  $[0,T]$ such that it exists  $k\in \N$ such that $$\mathcal P_{\alpha_{i_k}}\dots\mathcal P_{\alpha_{i_0}}\eta=\pi.$$  This  module doesn't  depend on the sequence $(i_k)_{k\ge 0}$ provided that it satisfies condition (\ref{condition}).
 
 For an integral dominant path $\pi$, one defines $\mathcal{P}$ on $(B\pi)^{\star \infty}$, where $\star$ stands for the usual concatenation (not the Littelmann's one), letting    for $ \eta\in (B\pi)^{\star \infty}$
 $$\mathcal{P}\eta(t)=\lim_{k\to \infty}\mathcal P_{\alpha_{i_k}}\dots\mathcal P_{\alpha_{i_0}}\eta(t), \quad t\ge 0.$$
  Note that for any $u\in \R_+$ it  exists $k_0$ such that
$$\mathcal P\eta(t)=\mathcal P_{\alpha_{i_k}}\dots\mathcal P_{\alpha_{i_0}}\eta(t), \quad t\in[0,u]\footnote{Notice that this fact remains true if $\eta$  has a piecewise $C^1$ component in $\mathfrak h_\R^*$.} ,$$ and that the definition of $\mathcal P$ does not depend on the order in which  the Pitman's transfoms are applied provided that each of them is possibly applied infinitely many times. \\

 \paragraph{\bf String coordinates, Littelmann transforms.}  In the following, the sequence  $(i_k)_{k\ge 0}$ can't be chosen arbitrarily. It is important that 
 \begin{align} \label{cond-red} 
 \textrm{for each $k\ge 0$, }s_{\alpha_{i_k}}\dots s_{\alpha_{i_{1}}}s_{\alpha_{i_{0}}} \textrm{ is a reduced decomposition}.
 \end{align}
   It is the case for instance   if $i_k=k \mod(n+1)$, for every $k\ge 0$.    From now  on we  fix  a sequence $\mathfrak i=(i_k)_{k\ge 0}$ such that (\ref{cond-red}) is satisfied (then condition (\ref{condition}) is also satisfied). For  a dominant path $\pi$ defined on $[0,T]$, we consider the  application $\mathfrak{a^i}$ from $B\pi$ to the set of almost zero  nonnegative integer sequences $\ell^{(\infty)}(\N)$ such that for $\eta\in B\pi$, $\mathfrak{a^i}(\eta)$ is the sequence of integers $(a^{\mathfrak i}_k)_{k\ge 0}$ in $\ell^{(\infty)}(\N)$ defined by the identities 
\begin{align}\label{def-string} 
\mathcal P_{\alpha_{i_m}}\dots\mathcal P_{\alpha_{i_0}}\eta(T)  =\eta(T)+\sum_{k=0}^ma^{\mathfrak i}_k\alpha_{i_k},\quad m\ge 0.
\end{align}
Notice that we will  most often omit $\mathfrak i$ in  $\mathfrak{a^i}$ and $a^\mathfrak{i}_k$, $k\ge 0$. Let us now give the connection with the Littelmann model described in \cite{littel2}. We consider   
$$w^{(p)}=s_{\alpha_{i_p}}\dots s_{\alpha_{i_{1}}}s_{\alpha_{i_{0}}} ,$$  for any $p\ge 1.$ 
Notice that the reflexions are not labeled in the same order as in \cite{littel2}. Nevertheless the path operators $e_{i_k}$ and $f_{i_k}$ defined in \cite{littel2} are     applied in the same order here.  For a tuple $\underline{a} = (a_0,\dots ,a_p)\in \N^{p+1}$  we write   $f^{\underline a}$ for
$$f^{\underline a} :=f^{a_0}_{i_0}\dots f^{a_p}_{i_p}.$$  For a dominant integral path $\pi$ and $\eta\in   B\pi$ such that   $\eta=f^{\underline a}\pi$ for $\underline a=(a_0,\dots,a_p),$  one says that $\underline{a}$ is an adapted string for $\eta$ if $a_0$ is the largest integer such that   
$e_{i_0}^{a_0}\eta\ne 0$, $a_1$ is the largest integer such that  
$e_{i_1}^{a_1}e_{i_0}^{a_0}\eta\ne 0$ and so on.  Actually, given $\eta$, the integers $a_0,a_1,\dots,$ are exactly the ones defined by (\ref{def-string}). In particular the application $\mathfrak{a^i}$ is injective on $B\pi$. Peter Littelmann   describes its image in \cite{littel2}. For this he defines $\mathcal S_{w^{(p)}}$ as the set of all $\underline{a}\in \N^{p+1}$ such that $\underline{a}$ is an adapted string  of $f^{\underline{a}}\pi$ for some dominant integral path $\pi$ and $\mathcal S_{w^{(p)}}^\lambda$ as the subset $\{\underline a\in \mathcal S_{w^{(p)}} : f^{\underline a}\pi_\lambda\ne 0\}$ where $\pi_\lambda$ is a dominant integral path ending at $\lambda\in P_+$. The set $\mathcal S_{w^{(p)}}^\lambda$ can be identified with the vertices of the crystal graph of a Demazure module. It  depends on $\pi_\lambda$ only throught $\lambda$.  If we let $$B(\infty)=\bigcup_{p\in \N} \mathcal S_{w^{(p)}}\quad and \quad B(\lambda)=\bigcup_{p\in \N}\mathcal S_{w^{(p)}}^\lambda,$$  proposition 1.5 of \cite{littel2} gives the following one, which will be essential to try to guess what   the L\'evy transformations could be for $n\ge 2$. 
\begin{prop} \label{Cones}
\begin{align*}
B(\lambda)&=\{\underline{a}\in B(\infty) : a_p\le \langle \lambda -\sum_{k=p+1}^\infty a_k\alpha_{i_k},\alpha_{i_p}^\vee\rangle, \forall p \ge 0\}\\
&=\{\underline{a}\in B(\infty) : a_p\le \langle \lambda -\omega(\underline{a})+\sum_{k=0}^p a_k\alpha_{i_k},\alpha_{i_p}^\vee\rangle, \forall p \ge 0\}\\
&=\{\underline{a}\in B(\infty) : \langle   \omega(\underline{a})-\sum_{k=0}^{p-1} a_k\alpha_{i_k}-\frac{1}{2}a_{p}\alpha_{i_p},\alpha_{i_p}^\vee\rangle\le \langle \lambda,\alpha_{i_p}^\vee\rangle, \forall p \ge 1\},
\end{align*}
where $\omega(\underline a)=\sum_{k=0}^\infty a_k\alpha_{i_k}$, which is the opposite of the weight of $\underline a$ in the crystal $B(\infty)$  of the Verma module of highest weight $0$.
\end{prop}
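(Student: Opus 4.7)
The plan is to treat the first equality as the substantive ingredient — it is exactly Proposition~1.5 of \cite{littel2}, on which the author explicitly relies — and derive the other two as purely algebraic rewrites. I would start by invoking Littelmann's result to get
$$B(\lambda)=\{\underline{a}\in B(\infty) : a_p\le \langle \lambda -\sum_{k=p+1}^\infty a_k\alpha_{i_k},\alpha_{i_p}^\vee\rangle, \forall p \ge 0\},$$
noting that since $\underline{a}\in \ell^{(\infty)}(\N)$ has only finitely many nonzero entries, the tail sum $\sum_{k=p+1}^\infty a_k\alpha_{i_k}$ and the weight $\omega(\underline{a})$ are both well-defined finite sums in $\widehat{\mathfrak h}^*_\R$.

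For the second equality, the decomposition
$$\omega(\underline{a})=\sum_{k=0}^p a_k\alpha_{i_k}+\sum_{k=p+1}^\infty a_k\alpha_{i_k}$$
gives $\sum_{k=p+1}^\infty a_k\alpha_{i_k}=\omega(\underline{a})-\sum_{k=0}^p a_k\alpha_{i_k}$, and substituting into the first inequality produces the desired form in one line. For the third, I would start from the second inequality, use $\langle\alpha_{i_p},\alpha_{i_p}^\vee\rangle=2$ to split
$$\Big\langle\sum_{k=0}^p a_k\alpha_{i_k},\alpha_{i_p}^\vee\Big\rangle=\Big\langle\sum_{k=0}^{p-1} a_k\alpha_{i_k},\alpha_{i_p}^\vee\Big\rangle+2a_p,$$
then isolate $\langle\lambda,\alpha_{i_p}^\vee\rangle$ on the right; rearranging and identifying $-a_p=\langle -\tfrac12 a_p\alpha_{i_p},\alpha_{i_p}^\vee\rangle$ reproduces exactly the third inequality.

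The main obstacle is not really technical: all the substantive content is black-boxed inside Littelmann's Proposition~1.5, and what remains is bookkeeping. The only points to watch are the finiteness of $\omega(\underline{a})$ (immediate from the almost-zeroness of $\underline a$) and tracking signs carefully when rearranging. One minor subtlety is the range $p\ge 1$ appearing in the third formula versus $p\ge 0$ in the first two; one should verify that the $p=0$ constraint of the second form reduces, under the algebraic identifications above, to the same inequality, so that no information is lost in this reindexing.
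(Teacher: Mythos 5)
Your approach is essentially the paper's own: the paper offers no independent proof of this proposition, saying only that it is a restatement of Proposition~1.5 of Littelmann's \emph{Cones, crystals, and patterns}, so the first equality is a black box and the other two are exactly the algebraic rewrites you perform. Your computations check out: substituting $\sum_{k>p}a_k\alpha_{i_k}=\omega(\underline a)-\sum_{k=0}^{p}a_k\alpha_{i_k}$ gives the second form, and then expanding
$\langle\sum_{k=0}^{p}a_k\alpha_{i_k},\alpha_{i_p}^\vee\rangle=\langle\sum_{k=0}^{p-1}a_k\alpha_{i_k},\alpha_{i_p}^\vee\rangle+2a_p$
and moving $\langle\lambda,\alpha_{i_p}^\vee\rangle$ to the right, with $a_p=\langle\tfrac12 a_p\alpha_{i_p},\alpha_{i_p}^\vee\rangle$, gives the third.

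On the $p\ge 1$ versus $p\ge 0$ point that you raise: you are right to flag it, but your resolution is too optimistic. With the empty-sum convention $\sum_{k=0}^{-1}=0$, the $p=0$ instance of the third inequality reads $\langle\omega(\underline a)-\tfrac12 a_0\alpha_{i_0},\alpha_{i_0}^\vee\rangle\le\langle\lambda,\alpha_{i_0}^\vee\rangle$, and a one-line computation shows this is equivalent to the $p=0$ instance of the first two forms. But this constraint genuinely depends on $\lambda$ (it is not a condition cutting out $B(\infty)$), and it is not implied by the inequalities for $p\ge 1$, so the third display with range $p\ge 1$ is strictly weaker than the first two displays with range $p\ge 0$. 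The intended reading is surely $p\ge 0$ with the empty-sum convention; your argument shows this, but you should state it as a correction rather than something ``to verify so that no information is lost.''
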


\section{Random walks and Littelmann paths}  \label{RWLP}
Let us consider a path $\pi_{\Lambda_0}$ defined on $[0,1]$ by 
$$\pi_{\Lambda_0}(t)=t\Lambda_0, \quad t\in [0,1],$$
and the Littelmann module $B{\pi_{\Lambda_0}}$ generated by $\pi_{\Lambda_0}$. We fix an integer $m\ge 1$, choose $\widehat \nu \in\widehat{ \mathfrak h}_\R^*$ such that $(\alpha_0\vert\widehat \nu)>0$. Littelmann path theory ensures that 
$$\mbox{ch}_{\Lambda_0}(\widehat \nu/m)=\sum_{\eta\in  B{\pi_{\Lambda_0}}}e^{\frac{1}{m}(\eta(1)\vert\widehat \nu)}.$$ 
We equip $B{\pi_{\Lambda_0}}$ with a probability measure $\mu^{m}$ letting 
\begin{align}\mu^{m}(\eta)=\frac{e^{\frac{1}{m}(\eta(1)\vert\widehat \nu)}}{\mbox{ch}_{\Lambda_0}(\widehat \nu/m)}, \quad \eta\in B{\pi_{\Lambda_0}}.
\end{align}\label{loi-discrete}
One considers a sequence  $(\eta_i^{m })_{i\ge 0}$ of i.i.d random variables with law $\mu^{m}$ and   a random path $\{\pi^{m}(t),t\ge 0\}$  defined by
$$\pi^{m}(t)=\eta_1^{m}(1)+\dots+\eta_{k-1}^{m}(1)+ \eta_k^{m}(t-k+1),$$ when $t\in [k-1,k[, $ for $k\in \Z_+$. 
 The Littelmann's path theory implies immediately the following proposition. 

\begin{prop} \label{DMC} The random process $\{\mathcal P(\pi^{m})(k),k\ge 0\}$ is a Markov chain starting from $0$ with values in $\widehat P_+$ and transition probability 
$$Q(\lambda,\beta)=\frac{\ch_\beta(\widehat \nu/m)}{\ch_\lambda(\widehat \nu/m)\ch_{\Lambda_0}(\widehat \nu/m)}M_{\lambda,\Lambda_0}^{\beta}, \quad \lambda,\beta\in \widehat P_+,$$
where $M_{\lambda,\Lambda_0}^{\beta}$ is the number of irreducible   representations in the isotypic componant  of  highest  weight $\beta$ in $V(\lambda)\otimes V(\Lambda_0)$.  
\end{prop}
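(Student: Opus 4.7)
My plan is to compute the joint law of $(X_0, X_1, \ldots, X_k) := (\mathcal{P}(\pi^m)(0), \ldots, \mathcal{P}(\pi^m)(k))$ in closed form via iterated application of Littelmann's tensor product theorem, and then read off both the Markov property and the transition kernel from the resulting product formula.

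The central input is a weighted counting identity. In the path model the concatenations $\eta_1 \star \cdots \star \eta_j$, with $\eta_i \in B\pi_{\Lambda_0}$, realize the crystal $B(\Lambda_0)^{\otimes j}$, and $\mathcal P(\eta_1 \star \cdots \star \eta_j)(j)$ is the highest weight of the unique irreducible component containing the concatenation. Refining at each intermediate step, given dominant weights $0 = \lambda_0, \lambda_1, \ldots, \lambda_k \in \widehat P_+$, the sequences $(\eta_1, \ldots, \eta_k)$ whose partial Pitman transforms end at $\lambda_1, \ldots, \lambda_k$ are organised as $\prod_{j=1}^{k} M_{\lambda_{j-1},\Lambda_0}^{\lambda_j}$ disjoint copies of the crystal $B(\lambda_k)$, with the endpoint of the concatenation equal to the weight of the corresponding crystal element. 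Summing $e^{(\eta_1(1)+\cdots+\eta_k(1)\,|\,\widehat\nu)/m}$ over each copy therefore produces $\ch_{\lambda_k}(\widehat\nu/m)$.

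Since $\eta_1^m, \ldots, \eta_k^m$ are i.i.d.\ with law $\mu^m$, dividing by the normalization $\ch_{\Lambda_0}(\widehat\nu/m)^k$ gives
$$\Pr(X_j = \lambda_j,\ 0 \le j \le k) = \frac{\ch_{\lambda_k}(\widehat\nu/m)}{\ch_{\Lambda_0}(\widehat\nu/m)^k}\,\prod_{j=1}^{k} M_{\lambda_{j-1},\Lambda_0}^{\lambda_j}.$$
Taking the ratio of the $(k+1)$-step to the $k$-step formula, the dependence on the past collapses onto $\lambda_k$ and yields
$$\Pr(X_{k+1} = \beta \mid X_j = \lambda_j,\ j \le k) = \frac{\ch_\beta(\widehat\nu/m)\, M_{\lambda_k,\Lambda_0}^\beta}{\ch_{\lambda_k}(\widehat\nu/m)\, \ch_{\Lambda_0}(\widehat\nu/m)},$$
which is simultaneously the Markov property and the stated transition kernel $Q$. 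Stochasticity of $Q$ is a by-product, since summing the numerator over $\beta$ gives $\ch_{\lambda_k}\ch_{\Lambda_0}$ by the character identity for $V(\lambda_k)\otimes V(\Lambda_0)$; that $X_k\in\widehat P_+$ is inherited from the dominance of $\mathcal P(\pi^m)|_{[0,k]}$ together with the integrality of each $\eta_i^m$.

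The delicate point I would want to unpack carefully is the assertion that the Pitman transform realizes the branching of $B(\Lambda_0)^{\otimes k}$ \emph{compatibly} at every intermediate time $j < k$: one needs the Littelmann path operators to interact with concatenation so that the chain of successive components $B(\lambda_j) \subset B(\lambda_{j-1}) \otimes B(\Lambda_0)$ is faithfully tracked by the integer-time values of $\mathcal P$. This compatibility is built into the definition of the Littelmann module on concatenated paths, so the whole argument is essentially an unwinding of Littelmann's branching identity; in the present affine setting it remains legitimate because each character $\ch_\lambda(\widehat\nu/m)$ converges absolutely under the hypothesis $(\alpha_0\mid\widehat\nu)>0$.
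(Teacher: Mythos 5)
Your proof is correct, and since the paper gives no argument at all (it says only ``The Littelmann's path theory implies immediately the following proposition''), what you have written is precisely the standard unwinding of that assertion — the same route one finds in Biane–Bougerol–O'Connell's work on Littelmann paths that the paper cites. The only caveat worth noting is that the reduction from paths $\{\eta_1\star\cdots\star\eta_j : \mathcal P(\eta_1\star\cdots\star\eta_j)(j)=\lambda_j\}$ to disjoint copies of $B\pi_{\lambda_j}$ uses that each connected component of the concatenated crystal is carried by its own dominant representative, so the inductive step ``one copy of $B(\lambda_{j-1})$ times $B\pi_{\Lambda_0}$'' is really applied with a different dominant path in each copy; this is harmless since the isomorphism type and the weight function (endpoint) are the same, but it is exactly the ``compatibility'' point you flag, and it is good you flagged it.
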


\begin{rem} If $\delta$ is the lowest positive null root, i.e. $\delta=\sum_{i=0}^n\alpha_i$, then $V(\lambda)$ and $V(\beta)$ are isomorphic for $\lambda=\beta \mod \delta$. Thus  $\{\mathcal P(\pi^{m})(k),k\ge 0\}$ remains markovian in  the quotient space $\widehat{\mathfrak h}_\R^*/\R\delta$. This is this process that interests us.\end{rem}

\section{A conditioned space-time Brownian motion}\label{CSTBM} 
One considers  the decomposition  $$\widehat{\mathfrak h}_\R^*=\R\Lambda_0\oplus\mathfrak{h}^*_\R \oplus \R\delta$$  where $\mathfrak{h}^*_\R= \bigoplus_{i=1}^n \R\alpha_i$ and one identifies $\widehat{\mathfrak h}_\R^*/\R\delta$  with $\R\Lambda_0\oplus\mathfrak{h}^*_\R $. We let $\mathfrak h^*= \bigoplus_{i=1}^n \C\alpha_i$.  We denote by $R_+$ the set of positive roots in $\mathfrak h^*$, by $\rho$  the corresponding Weyl vector, i.e. $\rho=\frac{1}{2}\sum_{\alpha\in R_+}\alpha$, by $P_+$ the corresponding set of dominant weights, i.e. 
$$P_+ =\{\lambda\in \mathfrak{h}_\R^* : \langle \lambda,\alpha_i^\vee\rangle \in \N \textrm{ for } i\in \{1,\dots,n\}\},$$  and by $W$ the subgroup of $\widehat W$ generated by the simple reflexions $s_{\alpha_i}$, $i\in\{1,\dots,n\}$.    The bilinear form $(\cdot\vert \cdot)$ defines a scalar product $\mathfrak h_\R^*$ so that we write $\vert \vert x\vert\vert^2$ for  $(x\vert x)$ when $x\in \mathfrak h_\R^*$. One  considers  a standard Brownian motion $\{b_t:t\ge 0\}$ in $\mathfrak{h}^*_\R$ with drift $\nu\in \mathfrak h_\R^*$ and $\{B_t=t\Lambda_0+b_t: t\ge 0 \}$, which is a space-time Brownian motion.  We define the function $\pi$ on  $ \widehat{\mathfrak{h}}^*$  letting  for $x\in \widehat{\mathfrak{h}}^*$,
 $$\pi(x)=\prod_{\alpha\in R_+}\sin\pi(\alpha\vert x),$$ and for $\lambda_1, \lambda_2\in \R_+^*\Lambda_0+\mathfrak{h}^*_\R $,  we let $$\psi_{\lambda_1}(\lambda_2)=\frac{1}{\pi(\lambda_1/t_1)}\sum_{w\in \widehat W}\det(w)e^{(w\lambda_1\vert\lambda_2)}.$$
where  $t_1=(\delta\vert\lambda_1)$.
Using a Poisson summation formula, Igor  Frenkel has proved in \cite{fre} (see also \cite{defo3}) that  if $\lambda_1=t_1\Lambda_0+x_1$ and $\lambda_2=t_2\Lambda_0+x_2$, for $x_1,x_2\in \mathfrak h_\R^*$, then
$\frac{\psi_{\lambda_1}(\lambda_2)}{\pi(\lambda_2/t_2) }$ is proportionnal to 
\begin{align}\label{form-fre}
(\frac{t_1t_2}{2\pi})^{-n/2}e^{\frac{t_1}{2t_2}(\lambda_2,\lambda_2)+\frac{t_2}{2t_1}(\lambda_1,\lambda_1)}\sum_{\mu\in P_+}\chi_\mu(x_2/t_2)\chi_\mu(-x_1/t_1)e^{-\frac{1}{2t_1 t_2}(2\pi)^2\vert \vert \mu+\rho\vert \vert^2},
\end{align}
where    $\chi_\mu$ is the character of the representation of highest weight $\mu$ of the underlying semi-simple Lie algebra having $\mathfrak h$ as a Cartan subalgebra, i.e.  $$\chi_\mu(\beta)\propto \frac{1}{\pi(\beta)}\sum_{w\in W}e^{2i\pi (w(\mu+\rho)\vert \beta)}, \, \beta\in \mathfrak{h}_\R^*.$$
We consider the cone $\mathcal C'$ which is  the cone $\mathcal C$ viewed in the quotient space, i.e.   $$\mathcal C'=\{\lambda\in\widehat{ \mathfrak h}_\R^*/\R\delta: \langle\lambda,\alpha_i^\vee\rangle\ge 0, i=0,\dots,n\},$$
and the stopping time $T=\inf\{t\ge 0:   B_t\notin \mathcal C'\}$.    One recognizes in the sum over $P_+$ of (\ref{form-fre}), up to a positive multiplicative constant, a probability density related to the heat Kernel on a compact Lie group (see \cite{fre} or \cite{defo3} for more details). As the function $\pi$ is a positive function on the interior of the cone $\mathcal C'$ vanishing on its boundary, one obtains easily the following proposition.  We let $\widehat \nu=\Lambda_0+\nu$.
\begin{prop} The function $$\Psi_{\widehat \nu}: \lambda\in \mathcal C'\to e^{-(\widehat{\nu}\vert \lambda)}\psi_{\widehat{\nu}}(\lambda)$$  is  a  constant sign  harmonic function for the killed process $\{  B_{t\wedge T}: t\ge 0\}$, vanishing only on the boundary of  $\mathcal C'$. 
\end{prop}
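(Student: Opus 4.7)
The plan is to unfold $\Psi_{\widehat\nu}$ into an alternating exponential series and verify the three requirements separately: pointwise space-time harmonicity by a generator computation, vanishing on $\partial\mathcal C'$ via a Weyl-group pairing, and the sign assertion via Frenkel's identity (\ref{form-fre}). Since $\delta$ is the null root, $(\delta\vert\alpha_i)=0$ for all $i$, so $(\delta\vert\widehat\nu)=(\delta\vert\Lambda_0)+(\delta\vert\nu)=1$; hence $t_1=1$ in the definition of $\psi$, and
\[\Psi_{\widehat\nu}(\lambda)=\frac{1}{\pi(\widehat\nu)}\sum_{w\in\widehat W}\det(w)\,e^{(w\widehat\nu-\widehat\nu\vert\lambda)}.\]

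For pointwise harmonicity, because $\widehat W$ preserves the invariant form and fixes $\delta$, the decomposition $w\widehat\nu=\Lambda_0+x_w+c_w\delta$ with $x_w\in\mathfrak h^*_\R$ satisfies $c_w=\tfrac12(\vert\vert\nu\vert\vert^2-\vert\vert x_w\vert\vert^2)$, read off from $\vert\vert w\widehat\nu\vert\vert^2=\vert\vert\widehat\nu\vert\vert^2=\vert\vert\nu\vert\vert^2$ and $(\delta\vert w\widehat\nu)=1$. On $\lambda=t\Lambda_0+x$ this gives
\[(w\widehat\nu-\widehat\nu\vert\lambda)=\tfrac t2(\vert\vert\nu\vert\vert^2-\vert\vert x_w\vert\vert^2)+(x_w-\nu\vert x),\]
and a short direct computation shows each such exponential is annihilated by the generator $\mathcal L=\partial_t+(\nu\vert\nabla)+\tfrac12\Delta$ of $\{B_t\}$.

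For the boundary vanishing, each wall of $\mathcal C'$ has the form $\{\langle\lambda,\alpha_i^\vee\rangle=0\}$ for some $i$, on which $s_{\alpha_i}\lambda=\lambda$; pairing $w$ with $s_{\alpha_i}w$ in the sum, the identities $\det(s_{\alpha_i}w)=-\det(w)$ together with $(s_{\alpha_i}w\widehat\nu\vert\lambda)=(w\widehat\nu\vert s_{\alpha_i}\lambda)=(w\widehat\nu\vert\lambda)$ produce exact cancellation. For the sign assertion, Frenkel's identity (\ref{form-fre}) expresses $\psi_{\widehat\nu}(\lambda)/\pi(\lambda/t)$ at $\lambda=t\Lambda_0+x$, $t>0$, as a positive constant times a heat-kernel series on a compact Lie group of type $A_n$, which is strictly positive at regular elements. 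On the interior of $\mathcal C'$ the point $x/t$ lies in the fundamental alcove so $\pi(\lambda/t)>0$, and the positive multiplier $e^{-(\widehat\nu\vert\lambda)}$ preserves this, giving $\Psi_{\widehat\nu}>0$ in the interior.

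The point I expect to require the most care is turning the pointwise identity $\mathcal L\Psi_{\widehat\nu}=0$ into the genuine martingale property of $\{\Psi_{\widehat\nu}(B_{t\wedge T})\}$ for the killed process. The alternating sum over the infinite group $\widehat W$ is only conditionally convergent at first glance, so one must verify uniform-on-compacts convergence and dominated-convergence estimates in order to interchange $\mathcal L$ with the sum and to justify optional stopping at $T$; the Gaussian decay visible in Frenkel's rewriting (\ref{form-fre}) supplies those bounds and upgrades $\Psi_{\widehat\nu}(B_{t\wedge T})$ to a true martingale rather than merely a local one.
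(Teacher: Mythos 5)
Your proof is correct and follows essentially the same route as the paper: the positivity and boundary behavior are read off from Frenkel's identity (\ref{form-fre}) together with the sign of the factor $\pi$, and the space-time harmonicity comes from the alternating sum over $\widehat W$ of exponentials that are each annihilated by the generator. The only difference is that you make explicit the term-by-term generator computation and the Weyl-pairing cancellation on the walls, whereas the paper subsumes both in the phrase ``one obtains easily,'' and you rightly flag the convergence upgrade needed to pass from a formal sum to a true martingale, which the absolutely convergent rewriting (\ref{form-fre}) indeed supplies.
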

 
\begin{defn}
We define $\{A_t: t\ge 0\}$ as the killed process $\{  B_{t\wedge T}: t\ge 0\}$ conditionned (in Doob's sens) not to die, via the harmonic function 
$\Psi_{\widehat{\nu}}$.
\end{defn}
\section{What we know}\label{WWK}
From now on we suppose  that $\widehat \nu=\Lambda_0+\nu$ in (\ref{loi-discrete}), where $\nu \in \mathfrak h^*_\R$ such that $\widehat \nu\in \mathcal C'$.
\subsection{Convergence of the random walk and the Markov chain}
The Fourier transform of $\eta_1^{m}(1)$ can be written with the character $\mbox{ch}_{\Lambda_0}$ and the following lemma is easily obtained using a Weyl character formula and formula (\ref{form-fre}).   
\begin{lem} \label{conv1}   For $x\in \mathfrak h^*$, one has
  $$\lim_{m\to \infty} \E(e^{(x\vert\frac{1}{m}\sum_{i=1}^{[mt]}\eta_i^{m}(1))})=e^{\frac{t}{2}(( x+\nu\vert x+\nu) -(\nu\vert\nu))}.$$ 
 
\end{lem}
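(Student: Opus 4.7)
The plan is to express the Laplace transform as a ratio of affine characters and then apply Frenkel's formula (\ref{form-fre}) to extract its asymptotics. Setting $h=1/m$, the independence of the $\eta_i^m$ and the explicit form of $\mu^m$ give
$$\mathbb{E}\bigl(e^{(x|\frac{1}{m}\sum_{i=1}^{[mt]}\eta_i^m(1))}\bigr)=\left[\frac{\mathrm{ch}_{\Lambda_0}\!\bigl(h(\widehat{\nu}+x)\bigr)}{\mathrm{ch}_{\Lambda_0}(h\widehat{\nu})}\right]^{[mt]},$$
so the task reduces to showing that
$$\log\!\frac{\mathrm{ch}_{\Lambda_0}(h(\widehat{\nu}+x))}{\mathrm{ch}_{\Lambda_0}(h\widehat{\nu})}=\frac{h}{2}\bigl[(x+\nu|x+\nu)-(\nu|\nu)\bigr]+o(h).$$

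By the Weyl--Kac character formula, $\mathrm{ch}_{\Lambda_0}(\mu)$ equals $\psi_{\Lambda_0+\widehat{\rho}}(\mu)/\psi_{\widehat{\rho}}(\mu)$ up to a $\mu$-independent constant. For $\eta\in\{\widehat{\nu},\widehat{\nu}+x\}$, write $\eta=\Lambda_0+\eta_{\mathrm{fin}}$ with $\eta_{\mathrm{fin}}\in\mathfrak{h}_\R^*$. Since $(\delta|\eta)=1$, Frenkel's formula with $\lambda_2=h\eta$ gives $t_2=h$ and $x_2/t_2=\eta_{\mathrm{fin}}$. Because $\|\mu+\rho\|^2$ attains its minimum on $P_+$ only at $\mu=0$ (where $\chi_0=1$), the $\mu=0$ term dominates the sum and the rest is $O(e^{-c/h})$, which yields
$$\log\psi_{\lambda_1}(h\eta)=\log\pi(\eta_{\mathrm{fin}})+B(\lambda_1,h)+\frac{h\,t_1}{2}(\eta_{\mathrm{fin}}|\eta_{\mathrm{fin}})+o(h),$$
where $B(\lambda_1,h)=-\tfrac{n}{2}\log(t_1h/(2\pi))+\tfrac{h}{2t_1}(\lambda_1|\lambda_1)-\tfrac{(2\pi)^2\|\rho\|^2}{2t_1h}$ collects every piece that depends only on $\lambda_1$ and $h$.

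The factor $\log\pi(\eta_{\mathrm{fin}})$ is common to $\psi_{\Lambda_0+\widehat{\rho}}(h\eta)$ and $\psi_{\widehat{\rho}}(h\eta)$, so it disappears when forming $\log\mathrm{ch}_{\Lambda_0}(h\eta)$, giving
$$\log\mathrm{ch}_{\Lambda_0}(h\eta)=\bigl[B(\Lambda_0+\widehat{\rho},h)-B(\widehat{\rho},h)\bigr]+\frac{h\bigl((\delta|\Lambda_0+\widehat{\rho})-(\delta|\widehat{\rho})\bigr)}{2}(\eta_{\mathrm{fin}}|\eta_{\mathrm{fin}})+\mathrm{const}+o(h).$$
With $(\delta|\Lambda_0)=1$ the quadratic coefficient equals $h/2$. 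The $B$-bracket and the constants carry no $\eta$-dependence, so they cancel in the $\eta$-difference; using that $(\Lambda_0|\Lambda_0)=(\Lambda_0|\mathfrak{h}^*)=0$ forces $(\eta|\eta)=(\eta_{\mathrm{fin}}|\eta_{\mathrm{fin}})$, and we obtain
$$\log\frac{\mathrm{ch}_{\Lambda_0}(h(\widehat{\nu}+x))}{\mathrm{ch}_{\Lambda_0}(h\widehat{\nu})}=\frac{h}{2}\bigl[(x+\nu|x+\nu)-(\nu|\nu)\bigr]+o(h).$$
Raising to the power $[mt]\sim t/h$ and exponentiating yields the announced limit.

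The main obstacle is the uniform control of the remainder in Frenkel's expansion: one must verify that
$$\sum_{\mu\neq 0}\chi_\mu(\eta_{\mathrm{fin}})\chi_\mu(-x_1/t_1)\,e^{-(2\pi)^2\|\mu+\rho\|^2/(2t_1 h)}$$
is negligible compared to the $\mu=0$ term. This follows from the polynomial growth of $|\chi_\mu|$ (Weyl dimension bound) combined with the strict gap $\|\mu+\rho\|^2-\|\rho\|^2>0$ for $\mu\neq 0$, producing an $O(e^{-c/h})$ remainder that is $o(h^k)$ for every $k$ and therefore harmless after multiplication by the polynomial factor $[mt]\sim 1/h$.
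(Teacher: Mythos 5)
Your proof is correct and takes exactly the route the paper sketches: the paper merely says that the Laplace transform ``can be written with the character $\operatorname{ch}_{\Lambda_0}$'' and that the lemma ``is easily obtained using a Weyl character formula and formula (\ref{form-fre}).'' You have filled in those details faithfully --- expressing the transform as $\bigl[\operatorname{ch}_{\Lambda_0}(h(\widehat\nu+x))/\operatorname{ch}_{\Lambda_0}(h\widehat\nu)\bigr]^{[mt]}$, applying the Weyl--Kac formula to reduce to the $\psi$-functions, and using Frenkel's expansion with the $\mu=0$ term dominating --- and the bookkeeping of the quadratic coefficient $h\bigl((\delta|\Lambda_0+\widehat\rho)-(\delta|\widehat\rho)\bigr)/2=h/2$ and the cancellation of $(\Lambda_0|\cdot)$ terms is carried out correctly.
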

As the coordinate  of $\sum_{i=1}^{[mt]}\eta_i^{m}(1)$ along $\Lambda_0$ is $[mt]$, the previous lemma shows that   the random walk whose increments are distributed according to $\mu^{m}$ converges in $\widehat{\mathfrak h}_\R^*/\R\delta$  after a  renormalisation in $1/m$ towards a space-time Brownian motion, the time component being along $\Lambda_0$. By analycity, lemma implies  also the convergence of the joint moments.
\begin{lem}  The joint moments of $\frac{1}{m}\sum_{i=1}^{[mt]}\eta_i^{m}(1)$ converge in the quotient space towards the ones of $B_t$.
\end{lem}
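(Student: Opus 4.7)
The idea is to deduce convergence of all joint moments from the convergence of the Laplace transform in Lemma \ref{conv1}, using analyticity of both sides in a common complex neighbourhood of $0$.

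Set $S^m_t=\frac{1}{m}\sum_{i=1}^{[mt]}\eta_i^m(1)$ and $L_m(x)=\E(e^{(x|S^m_t)})$ for $x\in\mathfrak h^*$. By i.i.d.-ness of the increments, $L_m(x)=\phi_m(x)^{[mt]}$ with
$$\phi_m(x)=\frac{\ch_{\Lambda_0}(\widehat\nu/m+x/m)}{\ch_{\Lambda_0}(\widehat\nu/m)},$$
so $L_m$ extends holomorphically to the half-space $\Omega=\{x\in\mathfrak h^*:(\widehat\nu+\Re(x)|\alpha_0)>0\}$, which contains a fixed open neighbourhood of $0$ for all $m\ge 1$. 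Lemma \ref{conv1} asserts pointwise convergence of $L_m$ on $\Omega$ to the entire function $L(x)=\exp\bigl(\tfrac t2[(x+\nu|x+\nu)-(\nu|\nu)]\bigr)$.

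The first step is to upgrade pointwise convergence to locally uniform convergence on $\Omega$. For complex $x\in\Omega$ we have $|L_m(x)|\le L_m(\Re(x))$, and in real directions $L_m$ is log-convex as a genuine Laplace transform. Given a compact $K\subset\Omega$, we can enclose its real part in a box whose corners lie in the real part of $\Omega$; log-convexity bounds $L_m$ on the box by the maximum of $L_m$ at the corners, and at each corner Lemma \ref{conv1} provides a finite limit, hence an $m$-uniform bound. Vitali's theorem then delivers locally uniform convergence $L_m\to L$ on $\Omega$.

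By the Weierstrass theorem on compact convergence of holomorphic functions, every complex partial derivative of $L_m$ at $0$ converges to the corresponding derivative of $L$. These derivatives are exactly the polynomial moments of the $\mathfrak h^*_\R$-valued random vector $S^m_t$, so they converge to the corresponding moments of the Gaussian vector $b_t$. Finally, the $\Lambda_0$-component of $S^m_t$ equals the deterministic quantity $[mt]/m\to t$, so the joint moments of $S^m_t$ viewed in $\widehat{\mathfrak h}^*_\R/\R\delta\simeq\R\Lambda_0\oplus\mathfrak h^*_\R$ converge to the joint moments of $B_t=t\Lambda_0+b_t$. The delicate point is the local uniform bound on $(L_m)$: because of the exponent $[mt]$ a naive pointwise log-expansion of $\phi_m$ does not suffice, and the log-convexity trick is precisely what turns the pointwise statement of Lemma \ref{conv1} into the uniform bound needed for Vitali.
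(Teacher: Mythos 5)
Your proof is correct and elaborates the paper's one-line ``by analycity'' remark into a full argument (local uniform bound from $|L_m(x)|\le L_m(\Re x)$ plus log-convexity, then Vitali and Weierstrass, with the deterministic $\Lambda_0$-coordinate handled separately), which is exactly the standard way of making that remark rigorous. This is the same approach as the paper, just spelled out; the paper itself supplies no further detail.
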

 One can  show as in \cite{defo2} that in the quotient space the Markov chain of proposition \ref{DMC} converges also. One has the following proposition where  convergences  are convergences in finite-dimensional distribution. We denote by $\lceil .\rceil$ the ceiling function.
\begin{prop} \label{conv} In the quotient space $\widehat{\mathfrak h}^*_\R/\R\delta$ one has the following convergences.
\begin{enumerate}  
\item The sequence $$\{\frac{1}{m}\pi^{m}(\lceil mt\rceil): t\ge 0 \}, \quad m\ge 1,$$  converges towards $\{B_t: t\ge 0\}$ when $m$ goes to infinity.
\item  The sequence $$\{\frac{1}{m}\mathcal P\pi^{m}(\lceil mt\rceil): t\ge 0\}, \quad m\ge 1,$$ converges towards $\{A_t: t\ge 0\}$ when $m$ goes to infinity.
\end{enumerate}
\end{prop}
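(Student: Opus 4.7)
I would deduce the first convergence from Lemma \ref{conv1} by a Donsker-type argument. Since the $(\eta_i^m(1))_{i\ge 1}$ are i.i.d., the joint characteristic function of $\frac{1}{m}(\pi^m(\lceil mt_1\rceil), \dots, \pi^m(\lceil mt_p\rceil))$ factors over the successive increments; applying Lemma \ref{conv1} to each block produces the Gaussian exponential characteristic of a drift-$\nu$ Brownian motion in $\mathfrak h^*_\R$, while the $\Lambda_0$-component is deterministic because $\lceil mt\rceil/m \to t$. Every path in $B\pi_{\Lambda_0}$ stays in a uniformly bounded set of piecewise linear functions on $[0,1]$, so the intra-interval interpolation $\eta_k^m(t-k+1)/m$ is $O(1/m)$ and disappears in the limit, yielding the convergence of finite-dimensional distributions to $\{B_t\}$ in $\widehat{\mathfrak h}_\R^*/\R\delta$.

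\textbf{Plan for (2).} For the second assertion I would exploit the Markov structure of Proposition \ref{DMC}. Iterating $Q$ and telescoping the character ratios, one obtains
$$\mathbb P(\mathcal P\pi^m(k)=\lambda) \;=\; N_k^\lambda \,\frac{\ch_\lambda(\widehat\nu/m)}{\ch_{\Lambda_0}(\widehat\nu/m)^k}, \qquad \lambda \in \widehat P^+,$$
where $N_k^\lambda$ is the multiplicity of $V(\lambda)$ in $V(\Lambda_0)^{\otimes k}$. Compared with the law of $\pi^m(k)$, this identifies $\frac{1}{m}\mathcal P\pi^m(k)$ with a discrete Doob $h$-transform of the random walk, the weight being the specialized character $\ch_\lambda(\widehat\nu/m)$. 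Part (1) combined with Frenkel's asymptotic formula (\ref{form-fre}) for $\ch_{m\lambda}(\widehat\nu/m)$ should then produce in the limit the Doob transform of the killed process $\{B_{t\wedge T}\}$ by $\Psi_{\widehat\nu}$, which by definition is $\{A_t\}$. Passage from integer to arbitrary times $t\ge 0$ is handled by the continuity of finite compositions of Pitman transforms and the footnote after the definition of $\mathcal P$, which allows the procedure to act on paths with a partial last increment.

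\textbf{Main obstacle.} The principal difficulty is the uniform character asymptotic on compact subsets of $\mathcal C'$, and this is where Frenkel's formula (\ref{form-fre}) carries all the weight: it rewrites the specialized affine character as a product of the Weyl denominator $\pi$ and a heat-kernel-like series on the underlying compact Lie group, and both the non-dominant terms and the prefactors must be shown to behave uniformly at the right rate $m^{-n/2}$ so that the discrete $h$-transform passes cleanly to its continuous counterpart. Once this analytic input is secured, the rest is a fairly direct transcription of the finite-type convergence from \cite{defo2} into the extended affine setting, the extra $\R\Lambda_0$ coordinate simply contributing a deterministic time direction that plays no role in the harmonic analysis.
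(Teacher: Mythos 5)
Your plan follows the paper's own route: Part (1) is deduced from Lemma \ref{conv1} via the i.i.d.\ structure of the $\eta_i^m(1)$ and the deterministic $\Lambda_0$-coordinate, and Part (2) is the discrete Doob $h$-transform argument from \cite{defo2}, using the Markov chain of Proposition \ref{DMC} together with Frenkel's formula (\ref{form-fre}) for the character asymptotics. Your telescoping identity for $\mathbb P(\mathcal P\pi^m(k)=\lambda)$ in terms of $N_k^\lambda$ and the specialized characters is correct and is exactly how the $h$-transform structure becomes visible.

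One inaccuracy worth flagging: the claim that every path in $B\pi_{\Lambda_0}$ stays in a uniformly bounded set of piecewise linear functions on $[0,1]$ is false. Since $V(\Lambda_0)$ is infinite-dimensional, $B\pi_{\Lambda_0}$ is infinite and its elements have unbounded excursions in the $\mathfrak h^*_\R$-direction; so the intra-interval increment $\eta_{\lceil mt\rceil}^m(\cdot)/m$ is not trivially $O(1/m)$. This is precisely why the paper devotes a separate block of lemmas (\ref{conv2}, \ref{condlaw}, \ref{maj}, \ref{boundinc}, \ref{extrapol}) to controlling $\pi^m(mt)-\pi^m(\lceil mt\rceil)$ before treating $\{\frac{1}{m}\pi^m(mt)\}$ and the tightness question. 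For the proposition as stated, however, only $\lceil mt\rceil$ appears, so the offending sentence plays no role and the rest of your argument is sound.
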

Now we want to prove that the first sequence of the proposition is tight in order to prove that the string coordinates associated to the Littelmann path model converges towards their analogs defined for the Brownian paths.
\subsection{Convergence of the string coordinates}
We want to prove that for any  integer $k$
$$\{\frac{1}{m}\mathcal P_{\alpha_{i_k}}\dots\mathcal P_{\alpha_{i_0}}\pi^{m}(\lceil mt\rceil):t\ge 0\} \textrm{ and } \{\frac{1}{m}\mathcal P_{\alpha_{i_k}}\dots\mathcal P_{\alpha_{i_0}}\pi^{m}(mt):t\ge 0\}$$
converges  in the quotient space  $\widehat{\mathfrak h}^*_\R/\R\delta$ towards 
$$\{\mathcal P_{\alpha_{i_k}}\dots\mathcal P_{\alpha_{i_0}}B(t):t\ge 0\},$$
when $m$ goes to infinity.
For this we will prove that the sequence 
$\{\frac{1}{m}\pi^{m}(mt): t\ge 0\}$, $m\ge 1$, is tight. We begin to establish the   following lemmas, which will be used to control the increments.
\begin{lem} \label{conv2}   For $x\in \mathfrak h^*$, one has
 $$\lim_{m\to \infty} \E(e^{(x\vert \frac{1}{\sqrt{m}}\eta_1^{m}(1))})=e^{\frac{1}{2}(x\vert x)}$$  
 and the joint moments of the projection  of $\frac{1}{\sqrt{m}}\eta_1^m(1)$ on $\mathfrak{h}_\R^*$ converge towards the ones of a standard Gaussian  random variable on $\mathfrak{h}_\R^*$.
\end{lem}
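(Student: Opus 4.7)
The strategy is to compute the Laplace transform of $\eta_1^m(1)$ explicitly as a character ratio, then extract its asymptotics from Frenkel's formula (\ref{form-fre}) in a regime different from the one underlying Lemma~\ref{conv1}. By the definition of $\mu^m$ and the identity $\ch_{\Lambda_0}(\widehat{\mu})=\sum_{\eta \in B\pi_{\Lambda_0}} e^{(\widehat{\mu}\vert\eta(1))}$, one has
\begin{equation*}
\E\bigl(e^{(x\vert \eta_1^m(1))}\bigr) = \frac{\ch_{\Lambda_0}(x+\widehat{\nu}/m)}{\ch_{\Lambda_0}(\widehat{\nu}/m)},
\end{equation*}
valid whenever $(\alpha_0\vert x+\widehat{\nu}/m)>0$. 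I would specialise at $x=y/\sqrt{m}$ with $y\in\mathfrak{h}^*$ (purely imaginary for the Fourier transform, or in the appropriate half-space for the Laplace transform; the remaining values then follow by analyticity in $y$).

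The key step is an asymptotic analysis of this ratio. Expressing $\ch_{\Lambda_0}$ via the Weyl--Kac formula as a ratio of Weyl-type alternating sums and applying Frenkel's formula (\ref{form-fre}) to both numerator and denominator at arguments of the form $(1/m)\Lambda_0+z$, the Gaussian factor $e^{-(2\pi)^2\Vert\mu+\rho\Vert^2/(2 t_1 t_2)}$ in (\ref{form-fre}), with $t_2=1/m\to 0$, forces the sum over $P_+$ to collapse onto its $\mu=0$ contribution up to exponentially small errors. The remaining prefactors $\pi(z/t)$, the powers of $t$ and the $\mu=0$ contributions cancel between numerator and denominator, leaving only the Gaussian-type exponential $e^{(z\vert z)/(2t)}$. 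One obtains
\begin{equation*}
\frac{\ch_{\Lambda_0}(y/\sqrt{m}+\widehat{\nu}/m)}{\ch_{\Lambda_0}(\widehat{\nu}/m)} = \exp\!\Bigl(\tfrac{m}{2}\bigl(y/\sqrt{m}+\nu/m\,\vert\, y/\sqrt{m}+\nu/m\bigr) - \tfrac{m}{2}\bigl(\nu/m\,\vert\,\nu/m\bigr) + o(1)\Bigr),
\end{equation*}
and the exponent expands to $\tfrac{1}{2}(y\vert y)+(y\vert\nu)/\sqrt{m}+o(1)\to\tfrac{1}{2}(y\vert y)$, which proves the first assertion. For the joint moments, the character ratio is analytic in $y$ on a fixed complex neighbourhood of $0$ in $\mathfrak{h}^*$, uniformly in large $m$ (once more by (\ref{form-fre})); Vitali's theorem then lifts pointwise convergence to uniform convergence on compact subsets, so the Taylor coefficients of the moment generating function at $0$, which are exactly the joint moments of the $\mathfrak{h}_\R^*$-projection of $\eta_1^m(1)/\sqrt{m}$, converge to those of the standard Gaussian.

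The main technical obstacle is the uniform control in $m$ of the non-leading $\mu\neq 0$ contributions in the theta-type sum of (\ref{form-fre}) after dividing numerator by denominator: one must estimate the characters $\chi_\mu$ evaluated at arguments of order $\sqrt{m}$ against the sharp Gaussian damping. This is a standard but nontrivial estimate; everything else in the proof reduces to algebraic manipulations and the Taylor expansion performed above. Note that Lemma~\ref{conv1} by itself is not sufficient to deduce the present statement: it only controls the cumulant generating function of $\eta_1^m(1)$ at points of size $1/m$, whereas here one needs control at the scale $1/\sqrt{m}$, which requires genuinely finer information on the character asymptotics.
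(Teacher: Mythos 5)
Your proposal is correct and follows essentially the same route as the paper: the paper's own proof is the one-liner ``We use the character formula and formula~(\ref{form-fre}) for the first convergence and analycity for the convergence of the moments,'' which is exactly the strategy you carry out in detail (character ratio for the Laplace transform, Frenkel asymptotics with the theta sum collapsing to $\mu=0$ as $t_2=1/m\to 0$, then Vitali for moments). Your remark that Lemma~\ref{conv1} alone cannot give this — because it controls the cumulant generating function only at the $1/m$ scale, not the $1/\sqrt m$ scale — is a correct and worthwhile observation.
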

\begin{proof} 
We use the character formula and  formula (\ref{form-fre}) for the first convergence and analycity for the convergence of the moments.
\end{proof}
In   lemma \ref{condlaw} and its corollary, for a  path $\eta$ defined on $[0,1]$, $a_0(\eta)$ is the first string coordinate  of $\eta$ corresponding to the sequence $\mathfrak i=i_0,i_1,\dots$, that is to say $a_0(\eta)=-\inf_{s\le 1}\langle\eta(s),\alpha_{i_0}^\vee\rangle$.
\begin{lem} \label{condlaw} For $r\in \N$, $u\in \C$,
$$\E(e^{(u\alpha_{i_0}\vert \eta_1^{m}(1))}\vert \langle\eta_1^{m}(1)+a_0(\eta_1^{m})\alpha_{i_0},\alpha_{i_0}^\vee\rangle=r)=\frac{s_r(e^{\frac{1}{2}{(\alpha_{i_0}\vert u\alpha_{i_0}+\widehat\nu/m)}})}{s_r(e^{\frac{1}{2}(\alpha_{i_0}\vert\widehat\nu/m)})},$$
where $s_r(q)=\frac{q^{r+1}-q^{-(r+1)}}{q-q^{-1}}$, $q>0$.
\end{lem}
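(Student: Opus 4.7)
The plan is to exploit the $\alpha_{i_0}$-string decomposition of the Littelmann module $B\pi_{\Lambda_0}$ induced by the root operators $e_{i_0},f_{i_0}$. Every $\eta\in B\pi_{\Lambda_0}$ lies in a unique $\alpha_{i_0}$-string whose top $\eta^{\mathrm{top}}:=e_{i_0}^{a_0(\eta)}\eta$ is the unique element killed by $e_{i_0}$, and the string coincides with $\{f_{i_0}^k\eta^{\mathrm{top}}:0\le k\le r\}$ where $r=\langle\eta^{\mathrm{top}}(1),\alpha_{i_0}^\vee\rangle$. Since by definition $\langle\eta(1)+a_0(\eta)\alpha_{i_0},\alpha_{i_0}^\vee\rangle=\langle\eta^{\mathrm{top}}(1),\alpha_{i_0}^\vee\rangle$, the conditioning event is exactly the union of all length-$(r+1)$ $\alpha_{i_0}$-strings, parametrized by the choice of an admissible top together with an index $k\in\{0,\ldots,r\}$.

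Next I would use the two elementary identities $f_{i_0}^k\eta^{\mathrm{top}}(1)=\eta^{\mathrm{top}}(1)-k\alpha_{i_0}$ and $(\alpha_{i_0}\vert\alpha_{i_0})=2$, which give
\begin{equation*}
\mu^m(f_{i_0}^k\eta^{\mathrm{top}})=\mu^m(\eta^{\mathrm{top}})\,e^{-(k/m)(\alpha_{i_0}\vert\widehat\nu)},\qquad
e^{(u\alpha_{i_0}\vert f_{i_0}^k\eta^{\mathrm{top}}(1))}=e^{(u\alpha_{i_0}\vert\eta^{\mathrm{top}}(1))}\,e^{-2ku}.
\end{equation*}
Crucially, the conditioning $\langle\eta^{\mathrm{top}}(1),\alpha_{i_0}^\vee\rangle=r$ forces $(\alpha_{i_0}\vert\eta^{\mathrm{top}}(1))=r$, so the prefactor $e^{(u\alpha_{i_0}\vert\eta^{\mathrm{top}}(1))}=e^{ur}$ is the same for every admissible top. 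Summing over $k$ first and over tops second in both numerator and denominator of the conditional expectation, the top-dependent weight $\mu^m(\eta^{\mathrm{top}})$ appears identically and cancels, leaving
\begin{equation*}
\E\bigl(e^{(u\alpha_{i_0}\vert\eta_1^m(1))}\,\big|\,\langle\eta_1^m(1)+a_0(\eta_1^m)\alpha_{i_0},\alpha_{i_0}^\vee\rangle=r\bigr)
=e^{ur}\,\frac{\sum_{k=0}^{r}q_u^{-2k}}{\sum_{k=0}^{r}q_0^{-2k}},
\end{equation*}
with $q_u:=e^{\frac{1}{2}(\alpha_{i_0}\vert u\alpha_{i_0}+\widehat\nu/m)}$ and $q_0:=e^{\frac{1}{2}(\alpha_{i_0}\vert\widehat\nu/m)}$.

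To conclude, a standard geometric-sum calculation yields $\sum_{k=0}^{r}q^{-2k}=q^{-r}s_r(q)$, and the normalization $(\alpha_{i_0}\vert\alpha_{i_0})=2$ gives $q_u=e^u q_0$, so that $e^{ur}\cdot(q_u/q_0)^{-r}=1$ exactly cancels the geometric-sum prefactor and leaves $s_r(q_u)/s_r(q_0)$, as required.

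The hard part is essentially only the crystal input in the first step: one must know that the root operators preserve $B\pi_{\Lambda_0}$, that each $\alpha_{i_0}$-string is saturated of length precisely $\langle\eta^{\mathrm{top}}(1),\alpha_{i_0}^\vee\rangle+1$, and that $f_{i_0}$ shifts endpoints by $-\alpha_{i_0}$. These are the Littelmann axioms recalled in Section~\ref{LPM}, so they pose no real obstacle; everything after that reduces to a one-parameter $\mathfrak{sl}_2$ character computation.
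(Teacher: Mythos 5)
Your proof is correct and follows essentially the same route as the paper's: the paper's terse proof invokes the string-cone description in Proposition~\ref{Cones} together with the fact that $a_0$ is unconstrained in $B(\infty)$, and your $\alpha_{i_0}$-string decomposition of $B\pi_{\Lambda_0}$ into saturated strings of length $\langle\eta^{\mathrm{top}}(1),\alpha_{i_0}^\vee\rangle+1$ is precisely that statement in crystal language. The factorization of the Boltzmann weight $\mu^m$ along each string and the cancellation of the top-dependent factor are exactly what make the $\mathfrak{sl}_2$ character $s_r$ appear, as you carry out.
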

\begin{proof}  We use the description of the cone of string coordinates in proposition \ref{Cones} and the fact that there is no condition for the first string coordinate  in    $B(\infty)$. \end{proof}
As $\langle \eta_1^{m}(1), \alpha_{i_0}^\vee\rangle$ admits a Laplace transform defined on $\R$, which implies in particular that the moments of each odrer of $\langle \eta_1^{m}(1), \alpha_{i_0}^\vee\rangle$ exist, the lemma has the following corollary.
\begin{cor} $$\E(\langle\eta_1^{m}(1),\alpha^\vee_{i_0}\rangle^4)=\E([ \langle\eta_1^{m}(1)+a_0(\eta_1^{m})\alpha_{i_0},\alpha_{i_0}^\vee\rangle+1]^4)$$
\end{cor}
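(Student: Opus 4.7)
The plan is to combine the conditional Laplace transform from Lemma~\ref{condlaw} with an eigenvalue identity for the $\mathfrak{sl}_2$ Weyl denominator.  Set $X := \langle\eta_1^{m}(1),\alpha_{i_0}^\vee\rangle$, $R := \langle\eta_1^{m}(1) + a_0(\eta_1^{m})\alpha_{i_0},\alpha_{i_0}^\vee\rangle$, and $q := e^{(\alpha_{i_0}|\widehat\nu)/(2m)}$, so the lemma reads $\E(e^{uX}\mid R = r) = s_r(qe^u)/s_r(q)$.  Because this conditional Laplace transform is analytic in $u \in \R$, every moment $\E(X^k)$ is finite (this is the observation made just before the corollary).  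Differentiating $k$ times at $u = 0$ and using $\partial_u f(qe^u)\big|_{u=0} = (q\partial_q) f(q)$ gives
\[
\E(X^k \mid R = r) \;=\; \frac{(q\partial_q)^k s_r(q)}{s_r(q)}.
\]

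Next I exploit the fact that $s_r$ is the character of the $(r+1)$-dimensional $\mathfrak{sl}_2$-irrep.  Writing $B_N(q) := q^N - q^{-N}$, $A_N(q) := q^N + q^{-N}$ and $D := q\partial_q$, the standard relations $DB_N = NA_N$, $DA_N = NB_N$ yield $D^{2k} B_N = N^{2k} B_N$, hence $D^4 B_{r+1} = (r+1)^4 B_{r+1}$.  Since $B_{r+1} = B_1 \cdot s_r$, Leibniz's formula applied to $D^4(B_1 s_r)$, followed by division by $B_1 s_r$ and substitution of the formula for the conditional moments, produces the pointwise conditional identity
\[
(r+1)^4 = \E(X^4 \mid R = r) + 4c\,\E(X^3 \mid R = r) + 6\,\E(X^2 \mid R = r) + 4c\,\E(X \mid R = r) + 1,
\]
where $c := A_1/B_1 = (q + q^{-1})/(q - q^{-1})$.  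Averaging over $R$ produces
\[
\E((R+1)^4) \;=\; \E(X^4) + 4c\bigl(\E(X) + \E(X^3)\bigr) + 6\,\E(X^2) + 1.
\]

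The final step, and the main obstacle, is to show that the correction $4c(\E(X) + \E(X^3)) + 6\,\E(X^2) + 1$ vanishes.  For this I would invoke the $\widehat W$-invariance $\ch_{\Lambda_0}(\widehat\nu/m) = \ch_{\Lambda_0}(s_{\alpha_{i_0}}(\widehat\nu/m))$ of the character, which translates into the functional equation $L(u) = L(-u - 2\log q)$ for the unconditional Laplace transform $L(u) := \E(e^{uX})$; equivalently, $v \mapsto L(v - \log q)$ is an even function of $v$.  The family of moment identities induced by this evenness, combined with the Leibniz identity above, should pin down the required cancellation and complete the proof.  Every other step is routine manipulation; the only delicate point is extracting the right moment relations from the Weyl-symmetry of $\ch_{\Lambda_0}$.
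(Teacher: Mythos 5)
Your Leibniz computation is correct and is the natural way to compare $\E(X^4)$ with $\E((R+1)^4)$ via the conditional Laplace transform from Lemma~\ref{condlaw}. But the proof stops at exactly the decisive point, and the mechanism you invoke to close the gap does not work. The functional equation $L(u)=L(-u-c_0)$ from $\widehat W$-invariance is satisfied automatically by the conditional structure itself (for \emph{any} distribution of $R$, since $s_r(p)=s_r(p^{-1})$ gives $L(-u-c_0)=\E[s_R((qe^u)^{-1})/s_R(q)]=\E[s_R(qe^u)/s_R(q)]=L(u)$), so it cannot by itself produce a relation among the first four moments that depends on the specific law of $R$. In other words, the symmetry you propose to exploit carries no information beyond what Lemma~\ref{condlaw} already encodes.

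Worse, your own Leibniz identity rules the cancellation out. Conditionally on $R=r$, the law of $X$ is $\P(X=x\mid R=r)\propto q^x$ on $\{-r,-r+2,\dots,r\}$ with $q=e^{(\alpha_{i_0}\mid\widehat\nu)/(2m)}\ge 1$ (because $\widehat\nu\in\mathcal C'$), so every conditional moment $\E(X^k\mid R=r)=\frac{(q\partial_q)^k s_r(q)}{s_r(q)}$ is nonnegative: the even ones obviously, and the odd ones because $(q\partial_q)^k s_r(q)=\sum_{j>0}j^k\bigl(q^j-q^{-j}\bigr)\ge 0$. Since also $c=(q+q^{-1})/(q-q^{-1})>0$, the identity
$$(r+1)^4=\E(X^4\mid R=r)+4c\,\E(X^3\mid R=r)+6\,\E(X^2\mid R=r)+4c\,\E(X\mid R=r)+1$$
gives $(r+1)^4\ge\E(X^4\mid R=r)+1$ for every $r$ (and in the boundary case $q\to 1$ the products $c\,\E(X^{2j+1}\mid R=r)$ converge to $\E(X^{2j+2}\mid R=r)\ge0$, so the conclusion persists). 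Averaging over $R$ therefore yields $\E\bigl((R+1)^4\bigr)\ge\E(X^4)+1$, so the correction term $4c(\E(X)+\E(X^3))+6\E(X^2)+1$ is at least $1$ and cannot vanish. Thus the final step is not merely unfinished: the route through Lemma~\ref{condlaw} and this Leibniz expansion cannot give the claimed equality, and a genuinely different argument (or a different reading of the statement) is required.
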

As the Littelmann module $B\pi$ does not depend on the sequence $\mathfrak i=(i_k)_{k\ge 0}$, corollary implies immediately the following proposition.
\begin{prop} \label{maj} For all $i\in\{0,\dots,n\}$, 
$$\E(\langle\eta_1^{m}(1)-\inf_{s\le 1}\langle\eta_1^{m}(s),\alpha_i^\vee\rangle\alpha_i,\alpha_i ^\vee\rangle^4)\le \E(\langle\eta^{m}_1(1),\alpha_i^\vee\rangle^4)$$
\end{prop}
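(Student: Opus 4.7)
The plan is to reduce Proposition \ref{maj} to the preceding corollary by exploiting the fact that the Littelmann module $B\pi_{\Lambda_0}$ and the probability measure $\mu^m$ on it are independent of the choice of sequence $\mathfrak i = (i_k)_{k\ge 0}$ used to define string coordinates. Fix $i \in \{0,\dots,n\}$, and write $X = \langle \eta_1^m(1), \alpha_i^\vee\rangle$ and $a = -\inf_{s\le 1}\langle \eta_1^m(s), \alpha_i^\vee\rangle$. Because $\eta_1^m(0) = 0$ we have $a \ge 0$, and since $\langle \alpha_i, \alpha_i^\vee\rangle = 2$ the quantity inside the fourth power on the left-hand side equals
\begin{equation*}
\langle \eta_1^m(1) + a\,\alpha_i, \alpha_i^\vee\rangle = X + 2a,
\end{equation*}
which is precisely $r := \langle \mathcal P_{\alpha_i}\eta_1^m(1), \alpha_i^\vee\rangle$, a nonnegative integer by the defining property of the Pitman transform. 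Thus the proposition is equivalent to $\E(r^4) \le \E(X^4)$.

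Next, I would observe that the law of $\eta_1^m$ is a function only of $B\pi_{\Lambda_0}$ and $\widehat\nu$, both of which are intrinsic data independent of $\mathfrak i$ as long as (\ref{condition}) holds. Consequently, the joint distribution of $X$ and $r$, which are measurable functions of $\eta_1^m$ alone, is also independent of $\mathfrak i$. For the fixed $i$, one can select a sequence $\mathfrak i' = (i'_k)_{k \ge 0}$ satisfying (\ref{cond-red}) with $i'_0 = i$; for instance, the cyclic sequence $i'_k = (i + k) \bmod (n+1)$ works, since successive simple reflections around the affine Dynkin diagram of type $A_n^1$ always extend a previous reduced expression. Applying the corollary with the sequence $\mathfrak i'$ (for which $a_0(\eta_1^m) = a$ by construction) then yields
\begin{equation*}
\E(X^4) = \E\!\left((r+1)^4\right).
\end{equation*}

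Finally, since $r$ is a nonnegative integer, $(r+1)^4 \ge r^4$ pointwise, hence
\begin{equation*}
\E(X^4) = \E\!\left((r+1)^4\right) \ge \E(r^4),
\end{equation*}
which is the desired inequality. The only real point to check carefully is the freedom to rechoose the string sequence so that $i'_0 = i$ without losing the reducedness condition (\ref{cond-red}); this is where one uses the specific combinatorics of $A_n^1$. Once that is granted, the proposition is just a rewriting of the corollary combined with the trivial inequality $(r+1)^4 \ge r^4$ for $r \in \N$.
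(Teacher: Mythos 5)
Your proof is correct and follows exactly the route the paper takes (the paper's proof is a one-line remark: "As the Littelmann module $B\pi$ does not depend on the sequence $\mathfrak i$, the corollary implies the proposition"). You have simply spelled out that remark: rechoose the defining sequence so that $i'_0=i$, identify the left-hand quantity with $r=\langle\mathcal P_{\alpha_i}\eta_1^m(1),\alpha_i^\vee\rangle\ge 0$, invoke the corollary to get $\E(X^4)=\E((r+1)^4)$, and conclude from the pointwise bound $r^4\le (r+1)^4$.
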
 

\begin{prop}\label{boundinc}  One has for any $i\in\{0,\dots,n\}$, $m\ge 1$, $t\ge 0$,
$$\vert\langle\pi^{m}(mt)-\pi^{(n)}(\lceil mt\rceil),\alpha_i^\vee\rangle\vert \le   \sum_{j=0  }^n\langle\eta^{m}_{\lceil mt\rceil}(1)-\inf_{s\le 1}\langle\eta^{m}_{\lceil mt\rceil}(s),\alpha_j^\vee\rangle\alpha_j,\alpha_j^\vee\rangle$$
\end{prop}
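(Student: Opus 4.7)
I would first use the concatenation definition of $\pi^m$ to localize the difference inside a single increment. Writing $k=\lceil mt\rceil$ and $\tau=mt-k+1\in[0,1]$, one has directly
\[
\pi^m(mt)-\pi^m(\lceil mt\rceil)=\eta_k^m(\tau)-\eta_k^m(1).
\]
Since $\eta_k^m\in B\pi_{\Lambda_0}$ by construction, and since $\mathcal P_{\alpha_j}\eta(1)=\eta(1)-\inf_{s\le 1}\langle\eta(s),\alpha_j^\vee\rangle\alpha_j$, it suffices to establish the following deterministic estimate: for every $\eta\in B\pi_{\Lambda_0}$, every $\tau\in[0,1]$, and every $i\in\{0,\dots,n\}$,
\[
|\langle\eta(\tau)-\eta(1),\alpha_i^\vee\rangle|\;\le\;\sum_{j=0}^n\langle\mathcal P_{\alpha_j}\eta(1),\alpha_j^\vee\rangle.
\]

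Next, I would control the left-hand side by the oscillation of the real-valued function $s\mapsto\langle\eta(s),\alpha_i^\vee\rangle$ on $[0,1]$. Setting $M_i=\max_{s\in[0,1]}\langle\eta(s),\alpha_i^\vee\rangle$ and $m_i=\min_{s\in[0,1]}\langle\eta(s),\alpha_i^\vee\rangle$, and using $m_i\le 0$ (because $\eta(0)=0$), the ``downward'' part satisfies
\[
\langle\eta(1),\alpha_i^\vee\rangle-m_i\;\le\;\langle\eta(1),\alpha_i^\vee\rangle-2m_i\;=\;\langle\mathcal P_{\alpha_i}\eta(1),\alpha_i^\vee\rangle,
\]
which is exactly the $j=i$ term of the target sum. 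It therefore remains to dominate the ``upward'' part $M_i-\langle\eta(1),\alpha_i^\vee\rangle$ by $\sum_{j\ne i}\langle\mathcal P_{\alpha_j}\eta(1),\alpha_j^\vee\rangle$.

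The hard part will be this upward bound. My preferred route is to exploit that $\mathcal P\eta=\pi_{\Lambda_0}$, whose $\alpha_i^\vee$-coordinate is the bounded function $s\mapsto s\delta_{i0}$. Running along the reduced sequence $\mathcal P_{\alpha_{i_0}},\mathcal P_{\alpha_{i_1}},\dots$ that flattens $\eta$ to $\pi_{\Lambda_0}$, each step modifies $\langle\cdot,\alpha_i^\vee\rangle$ by a multiple of $\langle\alpha_{i_r},\alpha_i^\vee\rangle$, which in type $A$ is nonpositive for $i_r\ne i$; the corresponding shifts would telescope to the desired majoration once correctly identified with the quantities $\langle\mathcal P_{\alpha_j}\eta(1),\alpha_j^\vee\rangle$. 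The real difficulty is that the successive infima defining these Pitman transforms are computed relative to the currently transformed path, not to the original $\eta$, so this identification is not immediate. A cleaner alternative would be a time-reversal argument: if the reversed path $\eta^{\ast}(s):=\eta(1)-\eta(1-s)$ also lies in $B\pi_{\Lambda_0}$ (which is plausible since $\pi_{\Lambda_0}$ is self-dual under $s\mapsto 1-s$), then
\[
M_i-\langle\eta(1),\alpha_i^\vee\rangle=-\min_{s\in[0,1]}\langle\eta^{\ast}(s),\alpha_i^\vee\rangle,
\]
and the upward case reduces to the downward case already handled, applied to $\eta^{\ast}$. Verifying this stability of $B\pi_{\Lambda_0}$ under time reversal, rather than grinding through the telescoping sums, is what I expect to be the crux of a clean argument.
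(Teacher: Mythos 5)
You set up the proof the same way the paper does: localize to one increment $\eta_k^m(\tau)-\eta_k^m(1)$, split the oscillation into a downward part $\langle\eta(1),\alpha_i^\vee\rangle-m_i$ and an upward part $M_i-\langle\eta(1),\alpha_i^\vee\rangle$, and bound the downward part by $\langle\eta(1),\alpha_i^\vee\rangle-2m_i=\langle\mathcal P_{\alpha_i}\eta(1),\alpha_i^\vee\rangle$, i.e.\ the $j=i$ summand. All of this matches the paper exactly, so the easy half is fine.

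The gap is exactly where you flag it: the upward bound. Neither of your two routes works. The ``telescoping along the flattening sequence'' route is not merely technically delicate because of nested infima; it also forgets that $\mathcal P$ applies $\mathcal P_{\alpha_i}$ infinitely often as well, and those steps \emph{increase} $\langle\cdot,\alpha_i^\vee\rangle$, so the signs do not simply conspire in your favor. The time-reversal route is circular: applying the downward estimate to $\eta^\ast$ yields $\langle\eta^\ast(1),\alpha_i^\vee\rangle-m_i^\ast\le\langle\eta^\ast(1),\alpha_i^\vee\rangle-2m_i^\ast$, which reduces to the tautology $m_i^\ast\le 0$, and in any case produces infima of $\eta^\ast$ (i.e.\ suprema of $\eta$), not the quantities $\langle\mathcal P_{\alpha_j}\eta(1),\alpha_j^\vee\rangle$ for $j\ne i$ that the target requires. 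The missing idea, which is the entire point of the proposition, is the affine relation $\delta=\sum_{j=0}^n\alpha_j$ together with the fact that for $\eta\in B\pi_{\Lambda_0}$ the level is just time: $(\delta\vert\eta(s))=s$ for all $s$ (because Pitman transforms shift by multiples of the $\alpha_j$ and $(\delta\vert\alpha_j)=0$). Since $(\alpha_i\vert\alpha_i)=2$ gives $\langle\eta(s),\alpha_i^\vee\rangle=(\alpha_i\vert\eta(s))=(\delta\vert\eta(s))-\sum_{j\ne i}(\alpha_j\vert\eta(s))=s-\sum_{j\ne i}\langle\eta(s),\alpha_j^\vee\rangle$, the upward excursion of $\langle\cdot,\alpha_i^\vee\rangle$ becomes, after the $s\le 1$ bound, a sum of downward excursions of the $\langle\cdot,\alpha_j^\vee\rangle$ for $j\ne i$, each of which is then dominated by $\langle\mathcal P_{\alpha_j}\eta(1),\alpha_j^\vee\rangle$ exactly as in your downward argument. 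Without invoking $\delta$ and the time-component identity, there is no mechanism transferring control from direction $i$ to the other directions, and the proof does not close.
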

\begin{proof}
\begin{align*}
\vert\langle\pi^{m}(mt)&-\pi^{m}(\lceil mt\rceil),\alpha_i^\vee\rangle\vert\\
&\le \max(\langle\eta^{m}_{\lceil mt\rceil}(1),\alpha_i^\vee\rangle-\inf_{s\le 1}\langle\eta^{m}_{\lceil mt\rceil}(s),\alpha_i^\vee\rangle,\sup_{s\le 1}\langle\eta^{m}_{\lceil mt\rceil}(s),\alpha_i^\vee\rangle-\langle\eta^{m}_{\lceil nt\rceil}(1),\alpha_i^\vee\rangle)
\end{align*} Besides 
$$\langle\eta^{m}_{\lceil mt\rceil}(1),\alpha_i^\vee\rangle-\inf_{s\le 1}\langle\eta^{m}_{\lceil mt\rceil}(s),\alpha_i^\vee\rangle\le \langle\eta^{(n)}_{\lceil mt\rceil}(1)-\inf_{s\le 1}\langle\eta^{(n)}_{\lceil mt\rceil}(s),\alpha_i^\vee\rangle\alpha_i,\alpha_i^\vee\rangle$$
and 
\begin{align*}
\sup_{s\le 1}\langle\eta^{m}_{\lceil mt\rceil}(s),\alpha_i^\vee\rangle-\langle\eta^{m}_{\lceil nt\rceil}(1),\alpha_i^\vee\rangle&=\sup_{s\le 1}(\delta-\sum_{j\ne i}\alpha_j\vert\eta^{m}_{\lceil mt\rceil}(s))-(\delta-\sum_{j\ne i}\alpha_j\vert\eta^{m}_{\lceil mt\rceil}(1))\\
&=\sup_{s\le 1}(s-\sum_{j\ne i}(\alpha_j\vert\eta^{m}_{\lceil mt\rceil}(s)))-(1-\sum_{j\ne i}(\alpha_j\vert\eta^{m}_{\lceil mt\rceil}(1)))\\
&\le \sum_{j\ne i}(\alpha_j\vert\eta^{m}_{\lceil mt\rceil}(1))-\inf_{s\le 1}((\alpha_j\vert\eta^{m}_{\lceil mt\rceil}(s))\\
&\le \sum_{j\ne i} \langle\eta^{m}_{\lceil mt\rceil}(1)-\inf_{s\le 1}\langle\eta^{m}_{\lceil mt\rceil}(s),\alpha_j^\vee\rangle\alpha_j,\alpha_j^\vee\rangle
\end{align*} 
\end{proof}

\begin{lem} \label{extrapol} It exists $C$ such that for any  $\epsilon>0$,  $m\ge 1$, $t\ge 0$, one has 
$$\P(\sum_{j=0  }^n\langle\frac{1}{m}\eta^{m}_{\lceil mt\rceil}(1)-\inf_{s\le 1}\langle\frac{1}{m}\eta^{m}_{\lceil mt\rceil}(s),\alpha_j^\vee\rangle\alpha_j,\alpha_j^\vee\rangle\ge \epsilon)\le \frac{C}{\epsilon^4m^2}$$
\end{lem}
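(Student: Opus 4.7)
The plan is a standard fourth-moment Markov bound. By the i.i.d.\ structure of $(\eta^m_i)_{i\ge 0}$, the random variable $\eta^m_{\lceil mt\rceil}$ has the same law as $\eta^m_1$, so I may replace $\lceil mt\rceil$ by $1$. Setting
$$Y^m_j := \langle \eta^m_1(1) - \inf_{s\le 1}\langle \eta^m_1(s),\alpha_j^\vee\rangle\alpha_j,\alpha_j^\vee\rangle \ge 0,$$
the nonnegative sum inside the probability equals $\tfrac{1}{m}\sum_{j=0}^n Y^m_j$, and by Markov's inequality at the fourth power it suffices to prove
$$\E\Bigl[\Bigl(\sum_{j=0}^n Y^m_j\Bigr)^4\Bigr]\le C\,m^2$$
for some constant $C$ independent of $m\ge 1$.

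By Minkowski's inequality in $L^4$, this reduces to $\|Y^m_j\|_4=O(\sqrt m)$ for each $j\in\{0,\dots,n\}$. Proposition \ref{maj} provides the key estimate $\E[(Y^m_j)^4]\le \E[\langle \eta^m_1(1),\alpha_j^\vee\rangle^4]$, absorbing the combinatorial content via the string-cone description of Proposition \ref{Cones}, so it only remains to check that
$$\tfrac{1}{m^2}\,\E[\langle \eta^m_1(1),\alpha_j^\vee\rangle^4] = \E\bigl[\langle \tfrac{1}{\sqrt m}\eta^m_1(1),\alpha_j^\vee\rangle^4\bigr]$$
stays bounded. This is precisely what Lemma \ref{conv2} delivers: the right-hand side converges to the fourth moment of the limiting Gaussian. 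For $j=0$ one picks up an extra deterministic $\Lambda_0$-coordinate contributing an additive $1/\sqrt m$, which vanishes in the limit and does not affect the argument.

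The only subtlety is uniformity over all $m\ge 1$ rather than just large $m$. For each fixed $m$ the fourth moment $\E[\langle \eta^m_1(1),\alpha_j^\vee\rangle^4]$ is finite because, as noted before the corollary of Lemma \ref{condlaw}, $\langle \eta^m_1(1),\alpha_j^\vee\rangle$ admits a Laplace transform defined on all of $\R$ (the sequence $\mathfrak i$ can be chosen with $i_0=j$). Combined with the convergence this produces a global upper bound, yielding the desired $C$. I do not foresee a real obstacle here; Proposition \ref{maj} is where all the nontrivial Littelmann-theoretic content is concentrated, and the rest is routine $L^4$ analysis followed by Markov.
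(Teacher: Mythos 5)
Your argument is correct and matches the paper's (very terse) proof, which simply cites Lemma \ref{conv2} and Proposition \ref{maj}: you carry out the same fourth-moment Markov bound, using Proposition \ref{maj} to control the Pitman-transformed quantity by $\E[\langle \eta^m_1(1),\alpha_j^\vee\rangle^4]$, Lemma \ref{conv2} to get the $O(m^2)$ bound on that fourth moment, and Minkowski to combine over $j$. The side remark about choosing $\mathfrak{i}$ with $i_0=j$ (to transfer the Laplace-transform finiteness argument to every $j$) is exactly the reasoning the paper implicitly uses when passing from its corollary of Lemma \ref{condlaw} to Proposition \ref{maj}.
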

\begin{proof} It comes from lemma \ref{conv2} and proposition \ref{maj}.
\end{proof}
\begin{prop} In the quotient space 
 $\{\frac{1}{m}\pi^{m}(mt): t\ge 0\}$ converges in a sense of finite dimensional law towards $\{B(t):t\ge 0\}$ when $m$ goes to infinity.\end{prop}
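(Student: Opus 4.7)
The plan is to reduce to Proposition \ref{conv}(1) via Slutsky's theorem. Proposition \ref{conv}(1) already gives the finite-dimensional convergence of $\{\frac{1}{m}\pi^{m}(\lceil mt\rceil):t\ge 0\}$ to $\{B(t):t\ge 0\}$. Therefore, fixing times $0\le t_1<\dots<t_k$, it suffices to prove that the vector of corrections
$$\bigl(\Delta_m(t_1),\dots,\Delta_m(t_k)\bigr),\quad \Delta_m(t):=\tfrac{1}{m}\pi^{m}(mt)-\tfrac{1}{m}\pi^{m}(\lceil mt\rceil),$$
converges to zero in probability in the quotient $\widehat{\mathfrak{h}}_\R^*/\R\delta$.

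I would detect convergence in the quotient through the pairings $\langle\cdot,\alpha_i^\vee\rangle$, which descend to the quotient since $\langle\delta,\alpha_i^\vee\rangle=0$ for every $i$, together with the $\Lambda_0$-component. The $\Lambda_0$-component of $\Delta_m(t)$ is deterministic and equal to $(mt-\lceil mt\rceil)/m=O(1/m)$, so it vanishes uniformly. For every $i\in\{0,\dots,n\}$, Proposition \ref{boundinc} pointwise bounds $|\langle\Delta_m(t_j),\alpha_i^\vee\rangle|$ by the $1/m$-scaled sum of transformed increments of $\eta^{m}_{\lceil mt_j\rceil}$ that appears in Lemma \ref{extrapol}, and the latter controls the probability that this sum exceeds $\epsilon$ by $C/(\epsilon^{4}m^{2})$. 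A union bound over the finitely many indices $i$ and $j$ then yields convergence in probability to zero of the full correction vector, and Slutsky concludes the proof.

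The genuine mathematical content is all upstream, in Proposition \ref{maj} together with the crystal-theoretic identity Lemma \ref{condlaw}: it is not obvious a priori that a Pitman-transformed increment admits a fourth-moment bound comparable to that of the raw increment, and this is precisely what makes the $O(1/m^{2})$ tail in Lemma \ref{extrapol} available. Once that input is granted, the present statement is a mechanical interpolation argument. I do not anticipate any fresh obstacle: the only subtleties are checking that the $\langle\cdot,\alpha_i^\vee\rangle$ descend to the quotient and that Slutsky applies coordinate by coordinate, both of which are immediate from the definitions.
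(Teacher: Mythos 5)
Your proposal is correct and follows essentially the same route as the paper, whose one-line proof cites exactly the same three ingredients: the convergence of $\{\frac{1}{m}\pi^{m}(\lceil mt\rceil):t\ge 0\}$ from Proposition \ref{conv}, the pointwise bound of Proposition \ref{boundinc}, and the tail estimate of Lemma \ref{extrapol}. You simply make explicit the Slutsky step, the union bound over the finitely many times and root indices, and the triviality of the $\Lambda_0$-component, all of which the paper leaves implicit.
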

 \begin{proof}
 It comes from the convergence of $\{\frac{1}{m}\pi^{m}(\lceil mt\rceil): t\ge 0\}$, lemma \ref{extrapol} and proposition before.
 \end{proof}
 \begin{prop} In the quotient space, the sequence $\{\frac{1}{m}\pi^{m}(mt),t\ge 0\}$, $m\ge 1$,  is  tight. 
\end{prop}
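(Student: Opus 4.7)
The plan is to reduce tightness to a Donsker-type argument for a rescaled random walk, with the remaining boundary term controlled by the fourth-moment bounds already established. Since the $\Lambda_0$-component of $\frac{1}{m}\pi^{m}(mt)$ equals $t$ deterministically (each $\eta^{m}_i(1)$ has $\Lambda_0$-component $1$), tightness in $\widehat{\mathfrak h}^*_\R/\R\delta$ reduces to tightness of the $\mathfrak h^*_\R$-projection. Writing $k_t=\lceil mt\rceil$ and letting $P$ denote the projection onto $\mathfrak h^*_\R$, I decompose
\begin{align*}
\frac{1}{m}P\pi^{m}(mt)=\frac{1}{m}\sum_{i=1}^{k_t-1}P\eta^{m}_i(1)+\frac{1}{m}P\eta^{m}_{k_t}(mt-k_t+1)=:X^m(t)+R^m(t),
\end{align*}
separating a rescaled random walk from a small boundary piece.

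For $X^m$, Lemma \ref{conv2} provides bounded fourth moments for $\frac{1}{\sqrt m}P\eta^{m}_1(1)$ together with a Gaussian limit, and Lemma \ref{conv1} identifies the drift $\mu_m$. The elementary fourth-moment expansion for a sum of i.i.d.\ random variables then gives
\begin{align*}
\E\bigl\|X^m(t)-X^m(s)-(t-s)\mu_m\bigr\|^4\le C|t-s|^2,
\end{align*}
uniformly for $s,t$ in a compact interval when $|t-s|\ge 1/m$; on smaller scales the same quantity is $O(1/m^2)$, which is absorbed after passing to the piecewise linear interpolation of $X^m$. The Kolmogorov--Chentsov criterion then yields tightness of $X^m$ in $C([0,T],\mathfrak h^*_\R)$.

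For the remainder $R^m$, Proposition \ref{maj} combined with Lemma \ref{conv2} (which is essentially the content of Lemma \ref{extrapol}) yields $\E[\sup_{s\le 1}\|P\eta^{m}_1(s)\|^4]=O(m^2)$, so a union bound over the $\lceil mT\rceil$ blocks gives $\sup_{t\in[0,T]}\|R^m(t)\|\le \frac{1}{m}\max_{i\le mT}\sup_{s\le 1}\|P\eta^{m}_i(s)\|=O_{\P}(m^{-1/4})$, which vanishes. Combining the tightness of $X^m$, the uniform vanishing of $R^m$, and the deterministic $\Lambda_0$-component then yields the required tightness. The main point requiring care is bookkeeping for the drift and the quotient: one has to verify that subtracting $\mu_m$ on the $\mathfrak h^*_\R$-projection is compatible with the quotient by $\R\delta$ and yields in the limit precisely the Brownian drift identified in Lemma \ref{conv1}, so that the Kolmogorov--Chentsov bound genuinely scales as $(t-s)^2$ and the tightness is really in the quotient rather than in $\widehat{\mathfrak h}^*_\R$.
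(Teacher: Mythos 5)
Your proof follows essentially the same route as the paper: separate the grid random walk (the partial sums $\sum_{i<\lceil mt\rceil}P\eta_i^m(1)$, linearly interpolated) from the within-block fluctuation, handle the walk by a Kolmogorov--Chentsov/Donsker argument based on the fourth-moment bounds from Lemma \ref{conv2}, and handle the remainder by a union bound over the $O(mT)$ blocks together with a fourth-moment tail estimate, which is exactly what Lemma \ref{extrapol} provides. This is also the paper's decomposition, phrased there directly as a modulus-of-continuity estimate on the coordinates $\langle\cdot,\alpha_i^\vee\rangle$.

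There is, however, one step you gloss over. You assert that Proposition \ref{maj} and Lemma \ref{conv2} yield $\E\bigl[\sup_{s\le 1}\|P\eta_1^m(s)\|^4\bigr]=O(m^2)$. Those two results control the fourth moment of $-\inf_{s\le 1}\langle\eta_1^m(s),\alpha_j^\vee\rangle$ for each $j$, i.e.\ only the \emph{downward} excursions of each coordinate of the path; they give no direct information on $\sup_{s\le 1}\langle\eta_1^m(s),\alpha_j^\vee\rangle$. The upward excursion is controlled through the affine level relation $\sum_{i=0}^n\langle\eta_1^m(s),\alpha_i^\vee\rangle=s\le 1$, which bounds $\sup_s\langle\cdot,\alpha_i^\vee\rangle$ by $1$ plus the sum of the downward excursions in the remaining directions. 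This is precisely what Proposition \ref{boundinc} does, and it is the place where the specific structure ($\delta=\sum_i\alpha_i$, level of $\Lambda_0$ equal to $1$) enters. Without invoking Proposition \ref{boundinc} (or reproving it), your moment bound on the within-block supremum is not justified; with it, your argument closes and agrees with the paper's proof.
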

\begin{proof} It is enough to prove that for any $i\in \{0,\dots,n\}$, $\epsilon,\eta>0$,  it exists an integer $k$ such that
\begin{align}\label{limsup}
\lim_m\sup k\max_{0\le l\le k-1}\P(\sup_{0\le r\le 1/k}\frac{1}{m}\langle\pi^{m}((r+l/k)m)-\pi^{m}(lm/k),\alpha_i^\vee\rangle\ge \eta)\le \epsilon.\end{align}
We write that $\vert \langle \pi^{m}(ms)-\pi^{m}(mt),\alpha_i^\vee\rangle\vert  $ is smaller than
 \begin{align*}
\vert\langle\pi^{m}(ms)-\pi^{m}(\lceil ms\rceil),\alpha_i^\vee\rangle\vert+ \vert\langle \pi^{m}(\lceil & ms\rceil )-\pi^{m}(\lceil mt\rceil),\alpha_i^\vee\rangle\vert\\
 &+ \vert\langle \pi^{m}(mt)-\pi^{m}(\lceil mt\rceil),\alpha_i^\vee\rangle\vert. \end{align*}
One has for $m,k\ge 1$, and $l\in\{0,\dots, k-1\}$
\begin{align*}
\P(\sup_{r\le 1/k}&\frac{1}{m}\langle\pi^{m}((r+l/k)m)-\pi^{m}(\lceil(r+l/k)m\rceil),\alpha_i^\vee\rangle\ge \eta)\\
&\le \P(\sup_{r\le 1/k}\frac{1}{m} \sum_{j=0  }^n\langle\eta^{m}_{\lceil (r+l/k)m\rceil}(1)-\inf_{s\le 1}\langle\eta^{m}_{\lceil (r+l/k)m\rceil}(s),\alpha_j^\vee\rangle\alpha_j,\alpha_j^\vee\rangle\ge \eta)\\
&\le\lceil \frac{m}{k}\rceil \P( \sum_{j=0  }^n\frac{1}{m}\langle\eta^{m}_{1}(1)-\inf_{s\le 1}\langle\eta^{m}_{1}(s),\alpha_j^\vee\rangle\alpha_j,\alpha_j^\vee\rangle\ge \eta)\le \lceil \frac{m}{k}\rceil\frac{C}{\eta^4m^2}.
\end{align*}
 We prove in a standard way that $\{\frac{1}{m}\pi^{m}(\lceil mt\rceil):t\ge 0\}$ satisfies (\ref{limsup}) for a particular integer $k$, which achieves the prove.
\end{proof}
 Thanks to the  Skorokhod representation theorem we can always suppose and we suppose  that the first convergence in propositon \ref{conv} is a locally uniform almost sure one. We have now all the ingredients to obtain the following theorem.  
 \begin{theo}
For every $t\ge 0$, and any sequence  $(i_k)_k$ of integers in $\{0,\dots,n\}$,
$$\frac{1}{m}\mathcal P_{\alpha_{i_k}}\dots \mathcal P_{\alpha_{i_0}}\pi^{m}(mt)\textrm{ and } \frac{1}{m}\mathcal P_{\alpha_{i_k}}\dots \mathcal P_{\alpha_{i_0}}\pi^{m}(\lceil mt\rceil),$$
converge almost surely towards $\mathcal P_{\alpha_{i_k}}\dots \mathcal P_{\alpha_{i_0}}B(t).$
\end{theo}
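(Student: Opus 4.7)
The plan is to deduce the result from two facts: (i) the Pitman operators $\mathcal P_{\alpha_i}$ are continuous for local uniform convergence of paths, and (ii) both $\frac{1}{m}\pi^{m}(\lceil m\cdot\rceil)$ and $\frac{1}{m}\pi^{m}(m\cdot)$ converge almost surely and locally uniformly to $B(\cdot)$. Fact (ii) is already at hand: the first convergence is granted by the Skorokhod representation invoked just before the theorem statement, and the second follows from a joint Skorokhod coupling, since the preceding two propositions give convergence in finite-dimensional distribution together with tightness of $\{\frac{1}{m}\pi^m(mt):t\ge 0\}$ in $C(\R_+)$.

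For fact (i), the first step is to check the elementary observation that if $\eta^m\to\eta$ locally uniformly on $\R_+$ with $\eta$ continuous, then for each simple root $\alpha_i$,
\[
\inf_{s\le t}\langle\eta^m(s),\alpha_i^\vee\rangle\longrightarrow\inf_{s\le t}\langle\eta(s),\alpha_i^\vee\rangle
\]
locally uniformly in $t$, whence $\mathcal P_{\alpha_i}\eta^m\to\mathcal P_{\alpha_i}\eta$ locally uniformly. Once this is established for a single Pitman transform, a straightforward induction on $k$ propagates local uniform convergence through the composition $\mathcal P_{\alpha_{i_k}}\dots\mathcal P_{\alpha_{i_0}}$, and evaluating at a fixed $t$ delivers both almost sure convergences claimed in the theorem.

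The only delicate step is the joint Skorokhod coupling, equivalently the control of $\sup_{t\le T}\frac{1}{m}|\langle\pi^m(mt)-\pi^m(\lceil mt\rceil),\alpha_i^\vee\rangle|$. Proposition \ref{boundinc} bounds this coordinate-wise by the maximum over $k\le mT+1$ of
\[
\sum_{j=0}^{n}\langle\eta^m_k(1)-\inf_{s\le 1}\langle\eta^m_k(s),\alpha_j^\vee\rangle\alpha_j,\alpha_j^\vee\rangle,
\]
and lemma \ref{extrapol} together with a union bound yields only $O(1/m)$ in probability. This is enough for convergence in probability but the tail is not summable in $m$, which is where the real obstacle lies. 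The cleanest way around it is the joint Skorokhod coupling already invoked, which relies only on tightness; alternatively, the exponential moments for $\frac{1}{\sqrt m}\eta_1^m(1)$ furnished by lemma \ref{conv2} can be used to upgrade lemma \ref{extrapol} to a Gaussian-type tail, after which Borel--Cantelli applies directly along the full sequence.
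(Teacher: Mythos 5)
Your main argument is correct and follows the route the paper intends: upgrade the weak convergences to almost sure locally uniform convergence via Skorokhod representation, then push the convergence through the Pitman transforms using their continuity in the locally uniform topology (which you correctly justify via $\vert\inf_{s\le t}f_m(s)-\inf_{s\le t}f(s)\vert\le\sup_{s\le t}\vert f_m(s)-f(s)\vert$, plus induction on $k$).

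However, your discussion of the ``delicate step'' is somewhat misdirected. There is no need for a ``joint Skorokhod coupling'' argument nor for a Gaussian-tail upgrade of lemma \ref{extrapol}, and you are wrong to equate the coupling with controlling $\sup_{t\le T}\frac{1}{m}\vert\langle\pi^m(mt)-\pi^m(\lceil mt\rceil),\alpha_i^\vee\rangle\vert$. The cleanest route is to apply Skorokhod once to the continuous-time process $\{\frac{1}{m}\pi^m(mt):t\ge 0\}$, which is exactly the object for which the two propositions preceding the theorem establish tightness in $C(\R_+)$ and finite-dimensional convergence. On the resulting space $\frac{1}{m}\pi^m(m\cdot)\to B$ almost surely, locally uniformly. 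The $\lceil m\cdot\rceil$ version then comes along for free: $\frac{1}{m}\pi^m(\lceil mt\rceil)=\frac{1}{m}\pi^m(m\sigma_m(t))$ with $\sigma_m(t)=\lceil mt\rceil/m\to t$ uniformly on compacts, so writing $\frac{1}{m}\pi^m(m\sigma_m(t))-B(t)=\bigl(\frac{1}{m}\pi^m(m\sigma_m(t))-B(\sigma_m(t))\bigr)+\bigl(B(\sigma_m(t))-B(t)\bigr)$ and using the uniform continuity of $B$ on compacts gives almost sure locally uniform convergence of the $\lceil m\cdot\rceil$ version without any further tail estimate. In particular the non-summability of $C/(\epsilon^4 m)$ is a red herring once the coupling is done at the level of the $mt$-process.

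Your proposed fallback also has a gap: lemma \ref{conv2} only gives convergence of the Laplace transform of the \emph{endpoint} $\eta_1^m(1)$, whereas what you need to control is $\langle\eta_1^m(1)-\inf_{s\le 1}\langle\eta_1^m(s),\alpha_j^\vee\rangle\alpha_j,\alpha_j^\vee\rangle$, which involves the running infimum. The passage from the endpoint to the running infimum is done in the paper only for fourth moments, via the identity in the corollary to lemma \ref{condlaw} and proposition \ref{maj}; an exponential analogue is not provided and would need to be established. So the Borel--Cantelli route is not available as stated. Stick with the single Skorokhod coupling on the tight process.
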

The theorem proves in particular that  the string coordinates associated to the Littelmann path model converges towards their analogs defined for the Brownian paths.  Actually  for any $t\ge 0$, if we consider the random sequence $(x^{m}_k(t))_k$ defined by
$$\mathcal P_{\alpha_{i_k}}\dots\mathcal P_{\alpha_{i_0}}\pi^{m}(t)=\pi^{m}(t)+\sum_{l=0}^kx^{m}_l(t)\alpha_{i_l},$$
and $(x_k(t))_k$ defined by 
$$\mathcal P_{\alpha_{i_k}}\dots\mathcal P_{\alpha_{i_0}}B(t)=B(t)+\sum_{l=0}^kx_l(t)\alpha_{i_l},$$
then the previous theorem shows that for every $k\ge 0$ and $t\ge 0$
$$\lim_{m\to \infty}\frac{1}{m}x_k^{m}(mt)=\lim_{m\to \infty}\frac{1}{m}x_k^{m}(\lceil mt\rceil)=x_k(t).$$ 
We can prove that this convergence remains true in law for $t=\infty$ provided that $\langle \widehat \nu,\alpha_i^\vee\rangle >0$ for every $i\in \{0,\dots,n\}$. In that case, one has the following convergence, which is proved in the appendix. 
 \begin{prop} \label{conv-string-infini} For every $k\ge 0$, the sequence  $(\frac{1}{m} x_k^{m}(\infty))_{m\ge 1}$ converges in law towards $x_k(\infty)$ when $m$ goes to infinity.
 \end{prop}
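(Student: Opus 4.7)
The overall strategy is to reduce the convergence at $t=\infty$ to the convergence at a large but finite $t=T$ already supplied by the preceding theorem, by showing that both $x_k^m(t)$ and $x_k(t)$ have stabilized by time $T$ with high probability, uniformly in $m$. The crucial monotonicity observation is that $t\mapsto x_k(t)$ and $t\mapsto x_k^m(t)$ are non-decreasing (each is minus the infimum, over $[0,t]$, of a coordinate of an iterated Pitman-transformed path). Hence the limits $x_k(\infty)=\lim_{t\to\infty} x_k(t)$ and $x_k^m(\infty)=\lim_{t\to\infty}x_k^m(t)$ exist in $[0,\infty]$, and the assumption $\langle\widehat\nu,\alpha_i^\vee\rangle>0$ for every $i$ will be used to make them almost surely finite.

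I would first handle the Brownian side: since a Pitman transform only adds a bounded random vector once its infimum has been attained, induction on $j$ gives $\mathcal P_{\alpha_{i_{j}}}\cdots\mathcal P_{\alpha_{i_0}}B(t)/t\to\widehat\nu$ almost surely, so every $x_k(\infty)$ is a.s.\ finite and for each $\varepsilon>0$ there exists $T_0$ with $\P(x_k(T_0)=x_k(\infty))\ge 1-\varepsilon$. The same argument on the random-walk side needs an extra input: differentiating Lemma \ref{conv1} at $0$ yields $\E\langle\eta_1^m(1),\alpha_i^\vee\rangle\to\langle\widehat\nu,\alpha_i^\vee\rangle>0$, and the character-theoretic expression of the Laplace transform of $\langle\eta_1^m(1),\alpha_i^\vee\rangle$, combined with Frenkel's formula (\ref{form-fre}), should provide uniform-in-$m$ exponential moment bounds on a fixed neighborhood of $0$. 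A Cram\'er-type large deviation estimate then produces $m$-independent constants $c,\gamma>0$ with $\P\bigl(\inf_{j\ge mT}\tfrac{1}{m}\langle\pi^m(j),\alpha_i^\vee\rangle\le 0\bigr)\le c\,e^{-\gamma T}$; iterating this bound through the $k$ Pitman transforms (each preserves the asymptotic drift $\widehat\nu$) yields $T_1$ such that $\P\bigl(x_k^m(mT_1)=x_k^m(\infty)\bigr)\ge 1-\varepsilon$ uniformly in $m$ large enough. Setting $T=\max(T_0,T_1)$ and combining with the preceding theorem, which gives $\tfrac{1}{m}x_k^m(mT)\to x_k(T)$ in law, a standard three-$\varepsilon$ coupling argument then shows that any weak subsequential limit of $\tfrac{1}{m}x_k^m(\infty)$ agrees with the law of $x_k(\infty)$.

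The main obstacle is the uniform-in-$m$ Cram\'er estimate. The law of $\eta_1^m(1)$ lives on an $m$-dependent lattice and depends on $m$ through the exponential weights $e^{(\eta(1)|\widehat\nu)/m}/\mathrm{ch}_{\Lambda_0}(\widehat\nu/m)$, so one must carefully verify that $u\mapsto\E e^{u\langle\eta_1^m(1),\alpha_i^\vee\rangle}$ is bounded uniformly in $m$ on some interval $[-\delta,\delta]$, with derivative at $0$ bounded away from $0$. This is where the character formula and Frenkel's identity (\ref{form-fre}) do the real work: they reduce the task to asymptotic estimates of ratios of characters that can be made uniform in $m$. Once this is in place, propagating positivity of drift through the iterated Pitman transforms is routine, because each transform only shifts the path by a random vector that remains bounded in probability.
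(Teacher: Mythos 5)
Your overall strategy matches the paper's: reduce the convergence at $t=\infty$ to convergence at a fixed large $T$, prove $m$-uniform stabilization of the string coordinates after time $T$ via a large-deviation lower-tail bound for the random walk $\langle\pi^m(\cdot),\alpha_i^\vee\rangle$, and propagate through the finitely many Pitman transforms. The paper also proves $k=0$ in full and then appeals to induction, which is what your iteration amounts to.

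However, there is a genuine gap in the step you yourself flag as the main obstacle. You propose that Frenkel's identity should yield a uniform-in-$m$ bound on $u\mapsto\E\,e^{u\langle\eta_1^m(1),\alpha_i^\vee\rangle}$ on a \emph{fixed} neighborhood of $0$, with derivative at $0$ bounded away from $0$, and then a Cram\'er/Chernoff estimate. This cannot work as stated, because Lemma~\ref{conv2} shows that $\frac{1}{\sqrt m}\eta_1^m(1)$ converges to a nondegenerate Gaussian, so $\mathrm{Var}\langle\eta_1^m(1),\alpha_i^\vee\rangle$ grows like $m$ while the mean stays of order one. Consequently, $\E\,e^{-\lambda\langle\eta_1^m(1),\alpha_i^\vee\rangle}\approx 1-\lambda\mu+\tfrac{1}{2}\lambda^2 m$ is $>1$ for every \emph{fixed} $\lambda>0$ once $m$ is large; a fixed-$\lambda$ Chernoff bound therefore gives nothing, and the exponential moment itself blows up in $m$ for any fixed $u\neq 0$. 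To get an $m$-uniform bound one must either choose $\lambda$ of order $1/m$ or, as the paper does, first renormalize by blocks: set $X_i^m=\frac{1}{m+1}\sum_{j=1}^m\langle\eta^m_{(i-1)m+j}(1),\alpha_{i_0}^\vee\rangle$, whose Laplace transform converges (by Lemma~\ref{conv1}) to that of $\langle B_1,\alpha_{i_0}^\vee\rangle$, and apply the appendix lemma to the resulting i.i.d.\ sum $S^m_k=\frac{1}{m+1}\langle\pi^m(mk),\alpha_{i_0}^\vee\rangle$. This block-averaging is precisely what collapses the variance to $O(1)$ and makes the Cram\'er estimate uniform in $m$; it is the missing ingredient in your argument. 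The remaining technicalities you skip (controlling the within-block maximum via a martingale maximal inequality, and the fractional-time discrepancy via Proposition~\ref{boundinc} and Lemma~\ref{extrapol}) are indeed routine once the large-deviation bound is in place.
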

 \section{What we do not know}\label{WWDNK}
 We have proved in \cite{boubou-defo} that when $n=1$,
 $$\lim_{m\to\infty} \lim_{k\to \infty}\mathcal P_{\alpha_{i_k}}\dots\mathcal P_{\alpha_{i_0}} \frac{1}{m}\pi^{m}(mt)=\lim_{k\to\infty} \lim_{m\to \infty}\mathcal P_{\alpha_{i_k}}\dots\mathcal P_{\alpha_{i_0}} \frac{1}{m}\pi^{m}(mt),$$
 is not true as the  righthand side limit in $k$  doesn't even exist. Nevertheless we have proved that this identity becomes true if we replace the last Pitman transformation $\mathcal P_{\alpha_{i_k}}$ by a modified one which is a L\'evy transformation $\mathcal L_{\alpha_{i_k}}$. 
   We would like to show that a similar result exists   for $A_n^{1}$, with $n$ greater than $1$, but we didn't manage to get it for the moment. 
   Before saying what the correction could be, it is  with no doubt interesting to compare  graphically  the curves obtained applying successively Pitman transformations to a simulation of a Brownian curve with the ones obtained when at each stage these transformations are followed with a L\'evy transformation. On the picture \ref{BLP} the paths of the first sequence of  paths are represented in blue, whereas those of the second sequence are represented in yellow.  The red curve is the image of the simulation of the Brownian curve (which is a piecewise linear curve) by $\mathcal P$. We notice that there is an explosion phenomenon for the blue curves which doesn't exist for the yellow ones. 
    \section{What we hope}\label{WWH}
   Let us  try now to guess what the L\'evy transforms could become for $n$ greater than $1$.  For  an integral   path $\pi$ defined on $\R_+$  such that for all $i\in\{0,\dots,n\}$, $$\lim_{t\to \infty} \langle \pi(t),\alpha_{i}^\vee\rangle=+\infty,$$   the string coordinates for  $T=\infty$ are well defined, and if we denote them by   $\underline a$, one can let  and we let $\omega(\pi)=\omega(\underline a)$,  where $\omega$ is defined in proposition \ref{Cones}. Suppose now that  $\widehat \nu$ in (\ref{loi-discrete}) satisfies $\langle \widehat \nu,\alpha_i^\vee\rangle >0$ for all  $i\in \{0,\dots,n\}$. In this case, the random string coordinates $x_k^m(\infty)$, $k\ge 0$, and $\omega(\pi^{m})$ are well defined. The law of $(x_k^m(\infty))_{k\ge 0}$ is the
   probability measure $\upsilon^m$ on $B(\infty)$ defined by 
$$\upsilon^m(\underline{a})=C_me^{-\frac{1}{m}(\omega(\underline a))\vert \widehat\nu)}, \quad \underline{a}\in B(\infty),$$
where $$C_m=\prod_{\alpha\in\widehat R_+}(1-e^{-\frac{1}{m}(\alpha,\widehat\nu)}),$$
For any $t\in \N$, the law of  $x_k^m(t)$, $k\ge 0$, given that $\mathcal P\pi^m(t)=\lambda$ is the law of $x_k^m(\infty)$, $k\ge 0$, given that $(x_k^m(\infty))_{k\ge 0}$ belongs  to $B(\lambda)$ i.e. 
   $$ \forall p \ge 1,\,\,   \langle   \omega(\pi^m)-\sum_{k=0}^{p-1} x_k^m(\infty)\alpha_{i_k}-\frac{1}{2}x^m_{p}(\infty)\alpha_{i_p},\alpha_{i_p}^\vee\rangle\le \langle \lambda,\alpha_{i_p}^\vee\rangle.$$ 
   The random variable $\omega(\pi^{m})$ is distributed as $$\sum_{\alpha\in \widehat R_+}\mathcal G_\alpha\alpha,$$
where $ \mathcal G_\alpha, \alpha\in \widehat R_+,$ is a sequence of independent random variables such that $\mathcal G_\alpha$ has a geometric law with parameter $e^{-\frac{1}{m}(\alpha,\widehat\nu)}$. Thus, in the quotient space $\widehat{\mathfrak{h}}^*_\R/\R\delta$, when $m$ goes to infinity, $\frac{1}{m}\omega(\pi^{m})$ converges in law towards  

\begin{align}\label{weightexpo}
\sum_{\beta\in R_+} \mathcal E_\beta\beta+\sum_{\beta\in R_+} \sum_{k\ge 1}(\mathcal E_{\beta+k\delta}-\mathcal E_{-\beta+k\delta})\beta,
\end{align}
where $ \mathcal E_\alpha,$ $\alpha\in \widehat R_+$, are independent exponentially distributed random variables with parameters $(\widehat \nu,\alpha)$, $ \alpha\in \widehat R_+$.
  If the convergence were an almost sure one, denoting $\omega(B)$ the limit (which is not at this stage a function of $B$, but a random variable  which has to be heuristically thought as the weight of $B$ in a Verma module), the quantities  $$\frac{1}{m}\langle\omega(\pi^{m})-\sum_{k=0}^{p-1}x^{m}_k(\infty)\alpha_{i_k}-\frac{1}{2}x^{m}_p(\infty)\alpha_{i_p},\alpha_{i_p}^\vee\rangle, \, \, p \ge 1,$$ would converge almost surely towards
    $$ \langle\omega(B)-\sum_{k=0}^{p-1}x_k(\infty)\alpha_{i_k}-\frac{1}{2}x_p(\infty)\alpha_{i_p},\alpha_{i_p}^\vee\rangle,\, \, p \ge 1,$$ 
    when $m$ goes to infinity.
In the case when $n=1$, it exists a random variable $  \omega(B)$ distributed as (\ref{weightexpo}) such that
   $$\langle  \omega(B)-\sum_{k=0}^{p-1}x_k(\infty)\alpha_{i_k}-\frac{1}{2}x_{p}(\infty)\alpha_{i_p},\alpha_{i_p}^\vee\rangle$$
   converges to  $0$ when $p$ goes to infinity, which is    essential for the proofs, because of the inequalities defining  $B(\lambda)$. It actually allows to prove that all the convergences obtained for the string coordinates in  a continuous analog of $B(\infty)$ remains true when string coordinates are conditioned to belong to  a continuous analog of $B(\lambda)$. So we are going to suppose that this convergence remains true for   $n\ge 2$. 
   \begin{hyp}\label{hyp1} We suppose that  it exists   $ \omega(B)$, such that
   $$\langle  \omega(B)-\sum_{k=0}^{p-1}x_k(\infty)\alpha_{i_k}-\frac{1}{2}x_{p}(\infty)\alpha_{i_p},\alpha_{i_p}^\vee\rangle$$
   converges almost surely towards $0$ when $p$ goes to infinity.
   \end{hyp} 
   Moreover the following assumptions are supposed to be true. They are natural if we think that a Pitman--L\'evy type theorem exists for $n\ge 2$, which is for us a strong hope even if we don't have for the moment any piece of strong evidence which could allow to claim that it is a conjecture. From now on, we suppose that the sequence $(i_k)$ is periodic with period $n+1$. We can certainly release this hypothesis but there is no point to try to guess as general a result as possible.
     \begin{hyps}\label{hyp2}  $ $ 
   \begin{enumerate} 
   \item It exists a sequence  $(u_p)$ with values in $\mathfrak h^*_\R$  such that  
  $$\sum_{k=0}^{p-1}x_k(\infty)\alpha_{i_k}+u_p,$$
  converges almost surely in the   quotient  space $\widehat{\mathfrak h}_\R^*/\R\delta$ towards   $\omega(B)$ when $p$ goes to $+\infty$.
  \item The sequence $(x_p(\infty))_p$  converges almost surely towards $l\in \R$ as $p$ goes to infinity.  
  \item $\sum_{k=0}^{p}(x_k(\infty)-l)\alpha_{i_k}$, or equivalently (under assumption (2)), $\sum_{k=0}^{p}(x_k(\infty)-x_p(\infty))\alpha_{i_k}$, converges in the quotient space when $p$ goes to infinity.
   \end{enumerate}
   \end{hyps}
   \begin{rem}  In the case when $n=1$, one has $\lim_{p\to\infty}x_p(\infty)_p=2$ and one can take for instance $u_p=\frac{1}{2}x_p(\infty)\alpha_{i_p}$, or $u_p=\alpha_{i_p}$,  $p\ge 0$.
   \end{rem} 
   \noindent
      Under assumptions  \ref{hyp2},  if $l\ne 0$, it exists $u\in \mathfrak{h}_\R^*$  such that 
      $$\lim_{p\to \infty} u_p+x_p(\infty)\sum_{k=0}^{p-1}\alpha_{i_k}=  \lim_{p\to \infty} ux_p(\infty),$$
  Under assumptions \ref{hyp1} and \ref{hyp2}, $u$ must satisfy
  $$\forall p\ge 0,  \, \langle u-\sum_{k=0}^{p-1}\alpha_{i_k}- \frac{1}{2}\alpha_{i_p},\alpha_{i_p}^\vee\rangle=0.$$
It exists only one such a $u$ in $\mathfrak{h}^*$. 
 \begin{center}
 \fbox{ 
\begin{minipage}[c]{12.5cm}
The hope is that for all $t\ge 0$, almost surely,
 $$B(t)-\sum_{k=0}^{p-1}x_k(t)\alpha_{i_k}-x_p(t)(u-\sum_{i=0}^{p-1}\alpha_{i_k}),$$
 converges in the  quotient  space towards  $A(t)$ when $p$ goes to infinity.
\end{minipage}
}
\end{center}

\bigskip

For $n=1$, and $\mathfrak i=0,1,0\dots$, one can take $u=\alpha_0/2$, which gives the proper correction. 
  \begin{figure}
     \begin{center}
            \includegraphics[scale=0.55]{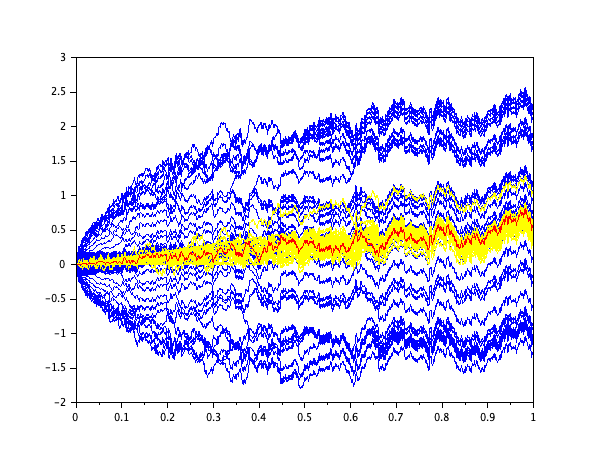}
            \caption{Successive transformations of a Brownian curve for $A_1^1$}
            \label{BLP}
                    \end{center} 
 \end{figure}
 
  \section{The case of $A_2^{(1)}$}
 We explicit the expected correction in the case when $n=2$, and $\mathfrak i=0,1,2,0,1, 2,\dots$. In that case one obtains that $u=\frac{1}{3}(\alpha_0-\alpha_2)$. 
  For $\eta(t)=t\Lambda_0+f(t), t\ge 0$,  where $f:\R_+\to \R\alpha_1+\R\alpha_2$ is a continuous fonction starting from $0$, we let for $i\in\{0,1,2\}$ 
$$\mathcal P_i\eta(t)=\eta(t)-\inf_{s\le t} \langle\eta(s),\alpha^\vee_i\rangle\alpha_i  ,$$
$$\mathcal L_i\eta(t)=\eta(t)-\frac{1}{3}\inf_{s\le t}\langle\eta(s),\alpha^\vee_i\rangle(\alpha_i-\alpha_{i+2}),$$
where $\alpha_3=\alpha_0$.

 \begin{center}
 \fbox{ 
\begin{minipage}[c]{12.5cm}
We hope that in the quotient space, for all $t\ge 0$, almost surely,
 $$\lim_k\mathcal L_{k}\mathcal P_{k-1}\dots \mathcal P_0B(t)=A(t),$$
 where the subcripts in the Pitman and the L\'evy   transformations must be taken modulo $3$.
\end{minipage}
}
\bigskip
\end{center}

 In figures  \ref{BLP1} and \ref{BLP2}  we have represented  successive transformations of a  simulated  brownian curve (evaluating on $\alpha_1^\vee$ and $\alpha_2^\vee$), with a correction and with no correction, similarly as in  figure  \ref{BLP}. We notice that the explosion phenomenon persists when there is no correction whereas it desapears with the expected needed correction. It gives some hope that  the ''conjecture'' is true.

  \begin{figure}
     \begin{center}
            \includegraphics[scale=0.55]{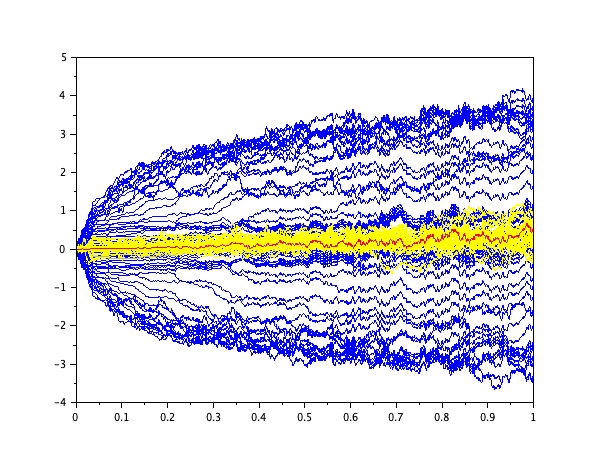}
            \caption{Successive transformations of a Brownian curve on  $\alpha_1^\vee$}
            \label{BLP1}
                    \end{center} 
 \end{figure}
 
  \begin{figure}
     \begin{center}
            \includegraphics[scale=0.55]{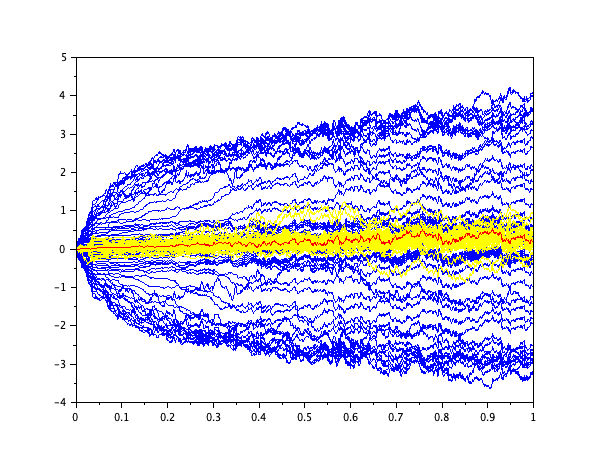}
            \caption{Successive transformations of a Brownian curve on   $\alpha_2^\vee$}
            \label{BLP2}
                    \end{center} 
 \end{figure}
 
 \section{Appendix}
 \begin{lem} Let $S_k^m=\sum_{i=1}^kX_i^{m}$, where $X_i^m$, $i\ge 0$, are independent identically distributed random variables such that 
 the Laplace transform of $X_1^m$ converges to the one of $X$ when $m$ goes to infinity. We suppose that $\E(X)>0$. Then  for any real $a$ such that $0<a<\E(X)$ 
  it exists $\theta>0, m_0\ge 0$ such that for all $m\ge m_0$, $k\ge 0$, $$\P(S_k^m<ka)\le  e^{-k \theta}.$$ 
 
 \end{lem}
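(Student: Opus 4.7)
This is a uniform in $m$ Cramér--Chernoff (large deviation) upper bound, so I would carry out the standard exponential Markov argument with the single extra point that the Laplace transform must be controlled uniformly in $m$. Denote $L_m(t)=\E(e^{tX_1^m})$ and $L(t)=\E(e^{tX})$; by hypothesis $L_m(t)\to L(t)$ on the domain where the Laplace transforms are defined, in particular in some neighbourhood of $0$.

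\textbf{Step 1: Chernoff form.} For any $\lambda>0$ in the domain of $L_m$, apply Markov's inequality to $e^{-\lambda S_k^m}$:
$$\P(S_k^m<ka)=\P(e^{-\lambda S_k^m}>e^{-\lambda k a})\le e^{\lambda k a}\,\E(e^{-\lambda S_k^m})=\bigl(e^{\lambda a}L_m(-\lambda)\bigr)^k,$$
using independence. So everything reduces to finding $\lambda>0$ and $\theta>0$ with $e^{\lambda a}L_m(-\lambda)\le e^{-\theta}$ for all $m$ large.

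\textbf{Step 2: Choice of $\lambda$ from the limit.} Set $\varphi(\lambda)=e^{\lambda a}L(-\lambda)$. Then $\varphi(0)=1$ and, differentiating under the expectation (which is legitimate inside the interior of the interval of convergence of $L$), $\varphi'(0)=a-\E(X)<0$ by hypothesis $a<\E(X)$. Therefore there exists $\lambda_0>0$ such that $\varphi(\lambda_0)<1$; write $\varphi(\lambda_0)=e^{-2\theta}$ for some $\theta>0$.

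\textbf{Step 3: Transfer to the prelimit.} By the assumed convergence of Laplace transforms, $L_m(-\lambda_0)\to L(-\lambda_0)$, so $e^{\lambda_0 a}L_m(-\lambda_0)\to e^{-2\theta}$. Hence there is $m_0$ with $e^{\lambda_0 a}L_m(-\lambda_0)\le e^{-\theta}$ for all $m\ge m_0$. Plugging into Step~1 with this $\lambda_0$ gives $\P(S_k^m<ka)\le e^{-k\theta}$, which is the claimed bound.

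\textbf{Main obstacle.} The only delicate point is ensuring that $L_m(-\lambda_0)$ and $L(-\lambda_0)$ are finite (so that Step~2 makes sense and the pointwise convergence of Laplace transforms at $-\lambda_0$ may be invoked). In the application of the lemma, $X_1^m$ is essentially a linear functional of $\eta_1^m(1)$, whose Laplace transform is finite on all of $\R$ by the explicit character formula used in Lemma~\ref{conv1}, so $L_m$ and $L$ are actually entire and this difficulty is vacuous; otherwise one would simply restrict to $\lambda_0$ inside a common interval of finiteness, which is automatic from the stated hypothesis.
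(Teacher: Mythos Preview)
Your proof is correct and follows essentially the same Cramér--Chernoff route as the paper: apply Markov's inequality to $e^{-\lambda S_k^m}$, pick $\lambda>0$ using the limiting variable $X$ (the paper does this via $\E(e^{-\lambda(X-\rho)})\le e^{\lambda c/4}$ with $\rho=\E(X)$, $c=\rho-a$, which is your $\varphi'(0)<0$ argument in slightly different packaging), and then use the convergence of Laplace transforms to transfer the bound to $X_1^m$ for all $m\ge m_0$. The only cosmetic difference is that the paper centres at $\rho$ before choosing $\lambda$, whereas you work directly with $e^{\lambda a}L(-\lambda)$; the resulting $\theta$ is the same up to renaming.
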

 \begin{proof}
  Let $\rho=\E(X)$, $c=\rho-a.$ Choose $\lambda>0$ such that $\E(e^{-\lambda(X-\rho)})\le e^{\lambda c/4}$. After that we choose $m_0$ such that for $m\ge m_0$, $$\E( e^{-\lambda (X^m_1-\rho)})\le e^{\lambda c/4}\E(e^{-\lambda (X-\rho)})\le e^{\lambda c/2}.$$ One has 
 $$\P(S_k^m<k(\rho-c))=\P(e^{-\lambda S_k^m}>e^{-\lambda k(\rho-c)})\le (e^{\lambda(\rho-c)}\E(e^{-\lambda X_1^m}))^k\le e^{-k \lambda c/2}.$$
 
 \end{proof}
  
 \begin{proof}[Proof of proposition \ref{conv-string-infini}]  Let us prove that it is true for $k=0$. Proposition will follow by induction. We have seen that for any $T\ge0$, almost surely 
 $$\lim_m\inf_{t\le T}\frac{1}{m}\langle \pi^{m}(mt),\alpha_{i_0}^\vee\rangle=\inf_{t\le T}\langle B(t),\alpha_{i_0}^\vee\rangle.$$
 Suppose that $\langle \widehat \nu,\alpha_{i_0}^\vee\rangle >0$. Then  $\inf_{t\ge 0}\langle B(t),\alpha_{i_0}^\vee\rangle>-\infty$, and we want to prove that 
 $$\lim_m\inf_{t\ge 0}\frac{1}{m}\langle \pi^{m}(mt),\alpha_{i_0}^\vee\rangle=\inf_{t\ge 0}\langle B(t),\alpha_{i_0}^\vee\rangle.$$
 It is enough to prove that for any $\epsilon>0$ it exists $T,m_0\ge 0$ such that for any $m\ge m_0$,
 $$\P(\inf_{t\ge T}\pi^m(mt)<0)\le \epsilon.$$
 For this let $$S_k^m=\sum_{i=1}^k\frac{1}{m+1}\sum_{j=1}^m\langle \eta_{(i-1)m+j}^m(1),\alpha_{i_0}^\vee\rangle.$$ It satisfies the hypothesis of the previous lemma, with $$X_i^m=\frac{1}{m+1}\sum_{j=1}^m\langle \eta_{(i-1)m+j}^m(1),\alpha_{i_0}^\vee\rangle,$$
 which converges in law towards  $\langle B_1,\alpha_{i_0}^\vee\rangle$ as $m$ goes to infinity. One has $$S_k^m=\frac{1}{m+1}\sum_{i=1}^{mk}\langle \eta_{i}^m(1),\alpha_{i_0}^\vee\rangle=\frac{1}{m+1}\pi^m(mk).$$ We let $\rho=\E(\langle B_1,\alpha_{i_0}^\vee\rangle)=\langle \widehat \nu,\alpha_{i_0}^\vee\rangle$. Let $\epsilon >0$,   $0<a<b<\rho$. As $ka+(b-a)\sqrt{k}\le b k$, forall $k\ge 1$, we choose $\theta>0$, $m_0\ge 0,$ such that  for all $k\ge 0$, $m\ge m_0$,
 \begin{align}\label{cramer}
 \P(S_k^m<ka+(b-a)\sqrt{k})\le e^{-k\theta},
 \end{align}
 i.e. 
  $$\P(\langle \pi^m(mk),\alpha_{i_0}^\vee\rangle<a(m+1)k+(b-a)(m+1)\sqrt{k})\le e^{-k \theta}.$$
 One has for $N\in \N^*$
 $$\{\inf_{t\ge N}\frac{1}{m}\langle\pi^{m}(\lceil mt\rceil),\alpha_{i_0}^\vee\rangle < at\}\subset \cup_{k\ge N}\cup_{0\le p\le m}\{\langle\pi^{m}(mk+p),\alpha_{i_0}^\vee\rangle\le a(mk+p)\},$$
 and 
 \begin{align*}
 \P(\inf_{t\ge N} \frac{1}{m}\langle\pi^{m}(&\lceil mt\rceil),\alpha_{i_0}^\vee\rangle < at)\le \P( \cup_{k\ge N}\{\langle\pi^m(mk),\alpha_{i_0}^\vee\rangle\le a(m+1)k+(b-a)(m+1)\sqrt{k}\})\\
 &+
 \P(\cup_{k\ge N}\{\sup_{0\le p\le m} \vert \langle\pi^{m}(mk+p),\alpha_{i_0}^\vee\rangle-\langle\pi^m(mk),\alpha_{i_0}^\vee\rangle\vert\ge (b-a)(m+1)\sqrt{k}\})
 \end{align*}
 Thanks to the lemma \ref{cramer} we can choose and we choose $N$ such that  the first probability is smaller than $\epsilon$. Besides
 \begin{align*}
 \P( \sup_{0\le p\le m} \vert \langle\pi^{m}&(mk+p),\alpha_{i_0}^\vee\rangle-\langle\pi^m(mk),\alpha_{i_0}^\vee\rangle\vert\ge (b-a)(m+1)\sqrt{k})\\
 &\le\frac{1}{(b-a)^4(m+1)^4k^2} \E( \sup_{0\le p\le m} \vert\langle\pi^{m}(mk+p),\alpha_{i_0}^\vee\rangle-\langle\pi^m(mk),\alpha_{i_0}^\vee\rangle\vert^4)
 \end{align*}
 One has 
 \begin{align*}
 \pi^m(mk+p)&=\sum_{i=1}^{mk+p}\langle \eta_i^m(1),\alpha_{i_0}^\vee\rangle-\E\langle \eta_i^m(1),\alpha_{i_0}^\vee\rangle+(mk+p)\E\langle \eta_1^m(1),\alpha_{i_0}^\vee\rangle\\
 &=Y^m(mk+p)+(mk+p)\E\langle \eta_1^m(1),\alpha_{i_0}^\vee\rangle,
 \end{align*}
 where $Y^m(mk+p)$, $p\in\{0,\dots,m\}$, is a martingale.
 A maximale inequality and the fact that $\E\langle\eta_1^m(1),\alpha_{i_0}^\vee\rangle\sim \langle\widehat \nu,\alpha_{i_0}^\vee\rangle$ imply that it exists $C$ such that 
 $$
 \P( \sup_{0\le p\le m} \vert \langle\pi^{m}(mk+p),\alpha_{i_0}^\vee\rangle-\langle\pi^m(mk),\alpha_{i_0}^\vee\rangle\vert\ge (b-a)(m+1)\sqrt{k})\\\le C/k^2.$$
 Since proposition \ref{boundinc} and lemma \ref{extrapol}    ensure that $\pi^{m}(mt)-\pi^m(\lceil mt\rceil)$ is bounded by a random variable $\xi_{\lceil mt\rceil}^m$ satisfying for any $u>0$
 $$\P(\xi_k^m/m\ge u\sqrt{k})\le \frac{\widetilde C}{k^2m^2},$$
 one obtains the proposition. 
 \end{proof}
 \bigskip


\begin{thebibliography}{10}
\bibitem{bbo}Ph. Biane, Ph. Bougerol and  N. O'Connell.
Littelmann paths and Brownian paths. Duke Math. J. 130 (2005) 127--167.


\bibitem{bbo2}Ph. Biane, Ph. Bougerol and  N. O'Connell. Continuous crystal and
Duistermaat-Heckmann measure for Coxeter groups. Adv. Maths. 221 (2009) 1522--1583.


\bibitem{boubou-defo}{Ph. Bougerol and M. Defosseux, {\it Pitman transforms and Brownian motion in the interval viewed as an affine alcove}, arXiv:1808.09182v4}
\bibitem{defo2} M. Defosseux. Affine Lie algebras and conditioned space-time Brownian motions in affine Weyl chambers. Probab. Theory Relat. Fields 165 (2015)  1--17.
\bibitem{defo3} M. Defosseux. Kirillov-Frenkel character formula for loop groups, radial part and Brownian sheet.  Ann. of Probab. 47 (2019) 1036--1055.
 \bibitem{fre}{I.B. Frenkel}. Orbital theory for affine Lie algebras. Invent. Math. {77}  (1984) 301--352.
\bibitem{Kac}{{V.G. Kac}. {\it Infinite dimensional Lie algebras.} { Third edition, Cambridge University Press}}, 1990.
\bibitem{littel1} P. Littelmann.  Paths and root operators in representation theory. Annals of Mathematics 142 (1995)  499--525.
\bibitem{littel2} P. Littelmann. Cones, crystals, and patterns. Transform. Groups 3 (1998) 145--179.
\bibitem{pitman}
J.W. Pitman. One-dimensional Brownian motion and the three-dimensional
Bessel process. { Adv. Appl. Probab.} 7 (1975) 511--526.



\end{thebibliography}
\end{document}